\documentclass[11pt]{article}
\usepackage{amsmath, amssymb, amsthm}
\usepackage{verbatim}
\usepackage{multicol}
\usepackage{enumerate}
\usepackage{comment}
\usepackage[none]{hyphenat}
\usepackage{hyperref}
\hypersetup{
	colorlinks=true,
	linkcolor=blue,
	filecolor=magenta,
	urlcolor=cyan,
	citecolor=blue
}

\usepackage{pgf}
\usepackage{tikz}
\usetikzlibrary{math,calc,intersections}
\usetikzlibrary{positioning,arrows,shapes,decorations.markings,decorations.pathreplacing,matrix,patterns}
\tikzstyle{vertex}=[circle,draw=black,fill=black,inner sep=0,minimum size=3pt,text=white,font=\footnotesize]
\usepackage{cleveref}

\date{}
\title{\vspace{-0.9cm} The extremal number of tight cycles}
\author{
Benny Sudakov\thanks{ETH Zurich, \emph{e-mail}: \textbf{\{benjamin.sudakov,istvan.tomon\}@math.ethz.ch}. Research was supported by SNSF grant 200021-175573.}
\and
Istv\'an Tomon\footnotemark[1] \thanks{MIPT Moscow, Research partially supported by  the Ministry of Educational and Science of the Russian Federation in the framework of MegaGrant no 075-15-2019-1926.}
}

\oddsidemargin  0pt
\evensidemargin 0pt
\marginparwidth 40pt
\marginparsep 10pt
\topmargin -5pt
\headsep 10pt
\textheight 8.8in
\textwidth 6.6in

\theoremstyle{plain}
\newtheorem{theorem}{Theorem}[section]

\newtheorem{corollary}[theorem]{Corollary}
\newtheorem{claim}[theorem]{Claim}
\newtheorem{lemma}[theorem]{Lemma}
\newtheorem{conjecture}[theorem]{Conjecture}

\theoremstyle{definition}

\DeclareMathOperator{\ex}{ex}

\newcommand{\e}{{\bf e}}
\newcommand{\g}{{\bf g}}
\newcommand{\f}{{\bf f}}

\begin{document}

\sloppy

	\maketitle
	
\begin{abstract}
    A \emph{tight cycle} in an $r$-uniform hypergraph $\mathcal{H}$ is a sequence of $\ell\geq r+1$ vertices $x_1,\dots,x_{\ell}$ such that all $r$-tuples $\{x_{i},x_{i+1},\dots,x_{i+r-1}\}$ (with subscripts modulo $\ell$) are edges of $\mathcal{H}$.
    An old problem of V. S\'os, also posed independently by J. Verstra\"ete, asks for the maximum number of edges in an $r$-uniform hypergraph on $n$ vertices which has no tight cycle. Although this is a very basic question, until recently, no good upper bounds were known for this problem for $r\geq 3$. Here we prove that the answer is at most $n^{r-1+o(1)}$, which is tight up to the $o(1)$ error term. 
    Our proof is based on finding  robust expanders in the line graph of $\mathcal{H}$ together with certain
    density increment type arguments. 
\end{abstract}

	\section{Introduction}
	Given a (possibly infinite) family of $r$-uniform hypergraphs $\mathcal{F}$, the \emph{Tur\'an number} or \emph{extremal number} of $\mathcal{F}$, denoted by $\mbox{ex}(n,\mathcal{F})$, is the maximum number of edges in an $r$-uniform hypergraph on $n$ vertices which does not contain a copy of any member of $\mathcal{F}$. The study of the extremal numbers of graphs and hypergraphs is one of the central topics in discrete mathematics, which
	goes back more than hundred years to the works of Mantel \cite{Ma} in 1907 and Tur\'an \cite{Tu} in 1941. Instances of this problem also appear naturally in discrete geometry, additive number theory, probability, analysis, computer science and coding theory. For a general reference, we refer the reader to the surveys \cite{FS13,K11,MPS, Su}. Despite that this topic was extensively studied, there are still many natural families of graphs and hypergraphs whose Tur\'an number is not well understood. In this paper, we make a substantial progress on understanding the extremal number of such a family of hypergraphs.
	
	One of the most basic results in graph theory says that if $G$ is a graph on $n$ vertices which does not contain a cycle, then $G$ has at most $n-1$ edges, and this bound is best possible. While this simple fact has many different proofs, analogues of this result for uniform hypergraphs turn out to be more challenging. There are different notions of cycles in hypergraphs one can consider: loose cycles, Berge cycles and tight cycles (which all coincide for graphs), but in this paper we focus on tight cycles. If $r\geq 2$ and $\ell\geq r+1$, the \emph{tight cycle of length $\ell$} is the $r$-uniform hypergraph with vertices $x_{1},\dots,x_{\ell}$ and edges $\{x_{i},x_{i+1},\dots,x_{i+r-1}\}$ for $i=1,\dots,\ell$, where all indices are modulo~$\ell$.  There is a large literature on the extremal number of Berge and loose cycles, see e.g. \cite{BGy08,FJ14,Gy06,GyL12,JM18,KMV15}, but the corresponding questions for tight cycles turn out to be particularly difficult.
	
	Let $\mathcal{C}^{(r)}$ denote the family of $r$-uniform tight cycles. Let $S^{(r)}_n$ be the $r$-uniform hypergraph, whose edges are those $r$-element subsets of $[n]$ that contain 1.  Clearly, $S^{(r)}_n$ contains no tight cycle, and $|E(S^{(r)}_n)|=\binom{n-1}{r-1}$. S\'os, and independently Verstra\"ete (see, e.g., \cite{MPS, V16}) conjectured that $S^{(r)}_n$ is extremal for tight cycles, that is, $\mbox{ex}(n,\mathcal{C}^{(r)})=\binom{n-1}{r-1}$ for sufficiently large $n$. This conjecture was recently disproved by Huang and Ma \cite{HM19}, who showed that for every $r\geq 3$ there exists some constant $1<c=c(r)<2$ such that $\mbox{ex}(n,\mathcal{C}^{(r)})\geq c\binom{n-1}{r-1}$ for every sufficiently large $n$. On the other hand, it is widely believed that $\mbox{ex}(n,\mathcal{C}^{(r)})=O(n^{r-1})$. Nevertheless, there were no upper bounds known getting close to this conjecture. In the case $r=3$, an unpublished result of Verstra\"ete states that if $C^{(3)}_{24}$ is the 3-uniform tight cycle of length 24, then $\mbox{ex}(n,C^{(3)}_{24})=O(n^{5/2})$. For $r\geq 4$, the best upper bound we were aware of is $\mbox{ex}(n,\mathcal{C}^{(r)})=O(n^{r-2^{-r+1}})$, which comes from the observation that the complete $r$-partite $r$-uniform hypergraph with vertex classes of size 2, denoted by $K^{(r)}_{2, \dots,2}$, contains the tight cycle of length $2r$, and we have $\mbox{ex}(n,K^{(r)}_{2, \dots,2})=O(n^{r-2^{-r+1}})$ by a well known result of Erd\H{o}s \cite{E64}. In case one wants to find a tight cycle of linear size,  Allen, B\"ottcher, Cooley, and Mycroft \cite{ABCM17} proved that for every $0<\alpha,\delta<1$ and sufficiently large $n$ (with respect to $r$ and $\delta$), any $r$-uniform hypergraph with $n$ vertices and at least $(\alpha+\delta)\binom{n}{r}$ edges contains a tight cycle of length $\ell$ for any $\ell\leq\alpha n$, $r \mid \ell$. However, the proof of this result uses the Hypergraph Regularity Lemma, and thus not applicable in the setting of sparse hypergraphs with $o(n^r)$ edges. In this paper, we prove the first upper bound for containing a tight cycle which matches the lower bound up to an $n^{o(1)}$ factor.
	
	\begin{theorem}\label{thm:mainthm}
	 If $\mathcal{H}$ is an $r$-uniform hypergraph on $n$ vertices which does not contain a tight cycle, then $\mathcal{H}$ has at most $n^{r-1+o(1)}$ edges.
	\end{theorem}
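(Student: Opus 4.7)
The plan is to argue by contradiction: assume $\mathcal{H}$ has at least $n^{r-1+\eps}$ edges for some fixed $\eps>0$ and produce a tight cycle. The natural auxiliary object is the \emph{tight-path digraph} $D$ whose vertices are the ordered $(r-1)$-tuples of distinct vertices of $\mathcal{H}$, with an arc $(v_1,\dots,v_{r-1}) \to (v_2,\dots,v_{r-1},w)$ whenever $\{v_1,\dots,v_{r-1},w\}\in E(\mathcal{H})$. A directed walk in $D$ encodes a tight walk in $\mathcal{H}$, and a directed closed walk corresponds to a tight cycle provided the ``newest'' coordinates visited along the walk are all distinct. Since $D$ has at most $n^{r-1}$ vertices and, up to the factor $r!$, at least $n^{r-1+\eps}$ arcs, its average out-degree is of order $n^{\eps}$, so $D$ is dense in a normalized sense.

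The first main step is to pass, by a density-increment argument, to an induced subhypergraph on some $n'\le n$ vertices whose tight-path digraph $D'$ is a \emph{robust expander}: every moderate-sized subset $X$ of vertices of $D'$ satisfies $|N^{+}(X)|\ge (1+\delta)|X|$ even after deletion of a tiny fraction of the arcs or of the underlying vertices of $\mathcal{H}$. If the current digraph fails to be a robust expander, then some set $X$ has abnormally few out-neighbors, which would confine a constant proportion of the arcs to a substantially smaller vertex set of $\mathcal{H}$. Such a localization raises the normalized density $|E(\mathcal{H}')|/|V(\mathcal{H}')|^{r-1}$ by a definite multiplicative factor, and since this density is bounded by $1/(r-1)!$, the process must terminate after $O(\log n)$ iterations in an induced subhypergraph whose tight-path digraph has the robust-expansion property while still having $n'^{r-1+\eps}$ edges.

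Granted a robust expander $D'$, the second step is to build a tight cycle. Standard BFS and neighborhood-doubling arguments in robust expanders produce many internally vertex-disjoint short directed paths between any two prescribed $(r-1)$-tuples; picking two such paths of slightly different lengths sharing both endpoints yields a short closed tight walk in $\mathcal{H}$. The main obstacle, and the reason robustness is essential rather than mere expansion, is that a closed walk in $D'$ may still reuse vertices of $\mathcal{H}$ outside its $(r-1)$-tuple coordinates, so it is not automatically a tight cycle. To resolve this I would construct the two paths one at a time, forbidding in advance the small set of vertices of $\mathcal{H}$ that have already been used; robustness guarantees that the expansion and path-finding arguments survive the deletion of this forbidden set from both the vertex list of $D'$ and the leading coordinates under consideration. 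Balancing the lengths of the two paths against the size of the forbidden set and against the expansion parameters is the most delicate piece of the argument, and it is precisely here that the $n^{o(1)}$ slack in the theorem is consumed.
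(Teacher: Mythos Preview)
Your framework --- pass to an auxiliary graph, extract a robust expander by density increment, then thread two short paths through it while forbidding already-used hypergraph vertices --- is the same skeleton the paper uses. The gap is in the density-increment step. You claim that if a set $X\subset V(D)$ of $(r-1)$-tuples has $|N^{+}(X)|\le(1+\delta)|X|$, then ``a constant proportion of the arcs is confined to a substantially smaller vertex set of $\mathcal{H}$'', so that $|E(\mathcal{H}')|/|V(\mathcal{H}')|^{r-1}$ goes up on an induced subhypergraph. That inference is not valid: a non-expanding set of $(r-1)$-tuples can perfectly well involve every vertex of $\mathcal{H}$, so there is no smaller $V(\mathcal{H}')$ to pass to. The paper never increments $|E|/|V|^{r-1}$. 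It works instead on the $r$-line-graph $G$ (whose vertices are the \emph{edges} of $\mathcal{H}$) and increments $\mbox{dens}(G)=r|V(G)|/p(G)$, where $p(G)$ counts the $(r-1)$-shadows (``blocks''). Failure of expansion then yields a subset of \emph{edges} --- not an induced subhypergraph --- on which this block-density is preserved or raised (Lemma~\ref{lemma:expander}); this is the localization that actually works.

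There is a second, related gap in the robustness you need. Deleting one vertex $v$ of $\mathcal{H}$ kills every $(r-1)$-tuple containing $v$, potentially $\Theta(n^{r-2})$ vertices of $D$, so expansion of $D$ alone has no reason to survive the forbidding step in your second-path construction. The paper couples expansion with a minimum-block-size condition $\delta(G)\ge d$ (each $(r-1)$-shadow extends to at least $d$ edges), obtained in Lemma~\ref{lemma:mindeg}, and then proves in Lemma~\ref{lemma:expander_robust} that a $(\lambda,d)$-expander remains an expander after deleting up to $\lambda d/(4r)$ coordinates. You would need to build and use an analogue of this. Finally, that ordinary expansion forces expansion along tight-path steps (the ``$\sigma$-expansion'' of Lemma~\ref{lemma:sigma_expansion}) is a genuine lemma with its own proof, not a consequence of standard BFS; your proposal treats this as automatic.
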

	
	\noindent
	More precisely, our proof shows that that there exists $c=c(r)>0$ such that $\mathcal{H}$ has at most $n^{r-1}e^{c\sqrt{\log n}}$ edges.
	
	\section{Preliminaries}
	Let us start by describing the notation we are going to use, some of which might be slightly unconventional. As usual, $[r]$ denotes the set $\{1,\dots,r\}$, and $S_{r}$ denotes the set of all permutations of $[r]$. If $G$ is a graph and $X\subset V(G)$, the \emph{neighborhood} of $X$ is $N_{G}(X)=N(X)=\{y\in V(G)\setminus X:\exists x\in X, xy\in E(G)\}$. A \emph{tight path} in an $r$-uniform hypergraph $\mathcal{H}$ is a sequence of $\ell\geq r+1$ vertices $x_1,\dots,x_{\ell}$ such that $\{x_{i},\dots,x_{i+r-1}\}\in E(\mathcal{H})$ for $i=1,\dots,\ell-r+1$.
	
	Let $\mathcal{H}$ be an $r$-uniform hypergraph on $n$ vertices. By considering a random partition of $V(\mathcal{H})$ into $r$-parts, we can find an $r$-partite subgraph $\mathcal{H}'$ of $\mathcal{H}$ with at least $\frac{r!}{r^r}|E(\mathcal{H})|$ edges. Therefore, it is enough to verify Theorem \ref{thm:mainthm} for $r$-partite $r$-uniform hypergraphs.	Now suppose that $\mathcal{H}$ is an $r$-partite $r$-uniform hypergraph with vertex classes $A_1,\dots,A_r$, each of size at most $n$. Instead of working with hypergraphs, we find it more suitable to work with their \emph{line graphs}. Also, instead of viewing edges as $r$-element subsets of the vertices, it is better to work with $r$-tuples of vertices. This motivates the following definition.
	
	Say that a graph $G$ is an \emph{$r$-line-graph} if the vertex set of $G$ is a set of $r$-tuples $V\subset A_{1}\times \dots\times A_{r}$, and $x$ and $y$ are joined by an edge in $G$ if and only if $x$ and $y$ differ in exactly one coordinate. A \emph{subgraph} of an $r$-line-graph $G$ always refer to an induced subgraph of $G$, which is an $r$-line-graph as well by definition. Given an $r$-partite $r$-uniform hypergraph $\mathcal{H}$, we can naturally identify it with an $r$-line-graph. A \emph{tight cycle} in an $r$-line-graph refers to a sequence of vertices corresponding to the edges of a tight cycle in the associated hypergraph.
	
	Let $G$ be an $r$-line-graph and let $X\subset V(G)\subset A_{1}\times \dots\times A_{r}$. For $i\in [r]$ and $X\subset V(G)$, the \emph{$i$-boundary} of $X$, denoted by $\partial^{(i)}_{G}(X)=\partial^{(i)}(X)$ is the set of vertices $y\in V(G)$ for which $y$ has a neighbor in $X$ which differs from $y$ in the $i$-th coordinate. Also, the \emph{$i$-neighborhood} of $X$ is $N^{(i)}_{G}(X)=N^{(i)}(X)=\partial^{(i)}(X)\setminus X$.
	
	For $i\in [r]$, an \emph{$i$-block} of $G$ is a set of the form $x\cup N^{(i)}(x)$ for some $x\in V(G)$, and a \emph{block} is an $i$-block for some $i\in [r]$. Note that the block containing $x$ is the set of all vertices which only differ from $x$ in the $i$-th coordinate. Therefore, a block is a clique in $G$, and the $i$-blocks of $G$ form a partition of $V(G)$ for any $i\in [r]$. Let $p(G)$ denote the number of blocks of $G$, and define the \emph{density} of $G$ as $$\mbox{dens}(G)=\frac{\sum_{B}|B|}{p(G)}=\frac{r|V(G)|}{p(G)},$$
	where the sum iterates over all blocks $B$ of $G$.
	
	 The \emph{$i$-degree} of a vertex $x\in V(G)$ is $d^{(i)}_{G}(x)=d^{(i)}(x)=|N^{(i)}(x)|+1$, where we write $N^{(i)}(x)$ instead of $N^{(i)}(\{x\})$ (so $d_{i}(x)$ is the size of the $i$-block containing $x$). With slight abuse of notation, the minimum degree of $G$, denoted by $\delta(G)$, is the minimum of $d^{(i)}(x)$ over all $x\in V(G)$ and $i\in [r]$, which is the minimum size of a block in $G$.

	\subsection{An overview of the proof}
	
	Let $\mathcal{H}$ be an $r$-partite $r$-uniform hypergraph with vertex classes of size at most $N$, and let $G$ be the $r$-line-graph associated with $\mathcal{H}$. Let $n=dN^{r-1}$ be the number of edges of $\mathcal{H}$, then $|V(G)|=n$ and $p(G)\leq rN^{r-1}$. Therefore, 
	$$\mbox{dens}(G)=\frac{rn}{p(G)}\geq \frac{rdN^{r-1}}{rN^{r-1}}=d.$$
	Hence, Theorem \ref{thm:mainthm} is an immediate consequence of the following theorem.
	
	\begin{theorem}\label{thm:mainthm2}
		There exists $c=c(r)>0$ such that the following holds. If $G$ is an $r$-line-graph with $n$ vertices that does not contain a tight cycle, then $\mbox{dens}(G)\leq e^{c\sqrt{\log n}}$.
	\end{theorem}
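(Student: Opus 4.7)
The plan is to mount a density-increment iteration whose base case is the construction of a tight cycle inside a robust expander. Assume $G$ is an $r$-line-graph on $n$ vertices with no tight cycle and density $T$; the goal is to derive $T \leq e^{c\sqrt{\log n}}$, which contradicts the tight-cycle-free hypothesis when $T$ is too large.

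First, I would regularize $G$ by passing to a subgraph in which all blocks have size within a factor of $2$ of each other. A dyadic pigeonhole over block sizes in each coordinate direction $i \in [r]$ extracts such a subgraph at the cost of only a $\log^{O(1)} n$ factor in the density, so from this point on every $i$-block has size $\Theta(T)$. This cleanup is essentially technical but necessary to make the subsequent expansion and cutting arguments quantitatively tight.

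Next I would establish a dichotomy. Either $G$ contains a large subgraph $G^{\ast}$ of comparable density that is a robust expander --- meaning $|\partial^{(i)}(S)| \geq (1+\eta)|S|$ for every $i \in [r]$ and every $S \subseteq V(G^{\ast})$ of moderate size, for an expansion parameter $\eta$ to be tuned --- or $G$ admits a sparse cut in some coordinate direction. In the sparse-cut case, counting blocks on each side shows that one of the two pieces has density strictly larger than $T$; this is the density-increment step. With parameters set so that the density gains a multiplicative factor $1+\Omega(\eta)$ at the cost of only a mild multiplicative loss in the vertex count, at most $O(\sqrt{\log n})$ iterations are possible before either the expander case is forced on a still-large subgraph, or the density would exceed the vertex count.

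The heart of the argument, and the main obstacle, is producing a tight cycle inside the robust expander $G^{\ast}$. I would take a tight path $P$ of maximum length in $G^{\ast}$ and run a P\'osa-style rotation-extension argument adapted to $r$-line-graphs. The coordinate structure of tight paths means a rotation must respect the cyclic pattern of coordinate changes, so one rotation reroutes the last $r$ vertices of $P$ through vertices in successive coordinate blocks; iterating, expansion guarantees that the set of reachable terminal $r$-tuples grows by a factor roughly $1+\eta$ per step, and once this set reaches polynomial size it can be matched against an $r$-tuple compatible with the start of $P$ to close off a tight cycle. The delicate point is that closing a tight cycle requires matching on $r-1$ consecutive coordinates simultaneously, not just on a single edge as in the graph case, so the expansion must be applied coordinate by coordinate and a nested expansion/rotation bookkeeping seems unavoidable. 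I expect this coordinate-by-coordinate amplification to be precisely what drives the $\sqrt{\log n}$ exponent in the final bound, once the increment in the sparse-cut case is balanced against the expansion threshold needed to guarantee closure.
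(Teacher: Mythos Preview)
Your proposal diverges from the paper's argument at its two central steps, and in both places the sketch has a genuine gap.

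\textbf{The cycle-finding step.} The paper does \emph{not} use a P\'osa rotation--extension argument. Given a robust $(\lambda,d)$-expander $H$ and two vertices $x,y$, it randomly splits each coordinate set $A_i$ into two halves, inducing a partition of $V(H)$ into $2^r$ parts with $x$ in one part $G_1$ and $y$ in the opposite part $G_2$; crucially, no vertex of $G_1$ shares a coordinate with any vertex of $G_2$. It then covers each of $G_1,G_2$ by small families of $(\lambda,\Omega(d))$-expanders, routes short $\sigma$-paths from $x$ into the expanders covering $G_1$ and short $\tau$-paths (with $\tau$ the reverse of $\sigma$) from $y$ into those covering $G_2$, expands inside each expander, and finally glues a $\sigma$-path ending in $G_1$ to a $\tau$-path ending in $G_2$ via a single $\sigma$-step across the partition. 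The coordinate-disjointness of $G_1$ and $G_2$ is exactly what guarantees the two halves of the cycle do not collide. Your rotation idea runs into precisely the obstacle you name: closing a tight cycle requires $r-1$ edges to line up simultaneously, and more basically, a P\'osa rotation that reverses a segment of a tight path does not in general produce a tight path, because the cyclic order of coordinate changes gets reversed. You flag the difficulty but offer no mechanism to resolve it; ``nested expansion/rotation bookkeeping seems unavoidable'' is not an argument. Nothing in your sketch plays the role of the partition trick that handles coordinate-disjointness.

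\textbf{The density increment.} In the paper the increment is \emph{not} driven by a sparse cut. It arises as the failure case of the path-connecting lemma: if one cannot connect $x$ to $y$ as above, it is because the expander covering of $G_1$ or $G_2$ uses too many pieces, forcing one of them to have at most $n/K$ vertices while still having minimum degree $\Omega(d)$. This allows $K=e^{\sqrt{\log n}}$, so each iteration shrinks the vertex count by a factor $K$ while the density only drops by a constant factor; after $\sqrt{\log n}$ rounds the density exceeds the vertex count. Your sparse-cut increment gains only a factor $1+\Omega(\eta)$ in density per step, and the expander dichotomy forces $\eta=\Theta(1/\log n)$; with that gain, the iteration count needed to reach a contradiction is on the order of $(\log n)/\eta$, not $O(\sqrt{\log n})$, so the bound you would obtain is much weaker than $e^{c\sqrt{\log n}}$.
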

	
	In the rest of the paper, we prove Theorem \ref{thm:mainthm2}. Let us briefly outline our proof strategy. Let $G$ be an $r$-line-graph of density $d$ such that $V(G)\subset A_1\times\dots\times A_r$. First, we show that $G$ contains a subgraph $H$ with minimum degree $\Omega(d)$ (as a reminder, here and everywhere else, minimum degree refers to our new definition of minimum degree) and good expansion properties, namely that every $X\subset V(H)$ of size at most $\frac{|V(H)|}{2}$ satisfies $|N(X)|\geq \lambda|X|$, where $\lambda=\Theta(\frac{1}{\log n})$.  Then, we show that $H$ is a robust expander, meaning that even if one removes of a few elements of $A_1\cup\dots\cup A_r$ (and thus deletes all the vertices containing a removed coordinate), $H$ still has good expansion properties. This can be found in Section \ref{sect:expansion}.
	
	Now fix an arbitrary permutation $\sigma\in S_r$. Let $x=(x_1,\dots,x_r),y=(y_1,\dots,y_r)\in V(G)$ such that $x_i\neq y_i$ for $i\in [r]$. Say that $y$ is a \emph{$\sigma$-neighbor} of $x$ if the following holds. Let $z_0=x$, and define $z_1,\dots,z_r\in A_1\times\dots\times A_r$ such that $z_i$ is the vector we get from $z_{i-1}$ after changing the $\sigma(i)$-th coordinate to $y_{\sigma(i)}$. Then $y=z_r$. If $z_1,\dots,z_{r-1}$ are all vertices of $G$, then say that $y$ is a $\sigma$-neighbor of $x$. If $X\subset V(G)$, the $\sigma$-boundary of $X$, denoted by  $\partial^{\sigma}_{G}(X)$, is the set of vertices $y$ which are the $\sigma$-neighbor of some $x\in X$. 
	
	This notion is useful for the following reason. Say that a sequence of vertices $v_1,\dots,v_k$ is a $\sigma$-path if $v_{i+1}$ is a $\sigma$-neighbor of $v_{i}$ for $i=1,\dots,k-1$, and no two vertices among $v_{1},\dots,v_{k}$ share a coordinate. Then a $\sigma$-path corresponds to a tight path in the associated hypergraph. Our goal is to show that the expansion of $H$ implies that for any pair of vertices $x,y\in V(H)$ not sharing a coordinate, there is a short $\sigma$-path $P$ starting with $x$ and ending with $y$. But then after removing the coordinates appearing in $P$ (except for the coordinates of $x$ and $y$), the remaining graph $H'$ still has good expansion properties. Therefore, we can find a $\sigma$-path $P'$ starting with $y$ and ending with $x$ in $H'$. But then $P\cup P'$ is a tight cycle in $H$, and we are done. Unfortunately, we are not quite able to show that there is a $\sigma$-path from any $x$ to any $y$, but we can prove that either this is the case, or we can find a small subgraph of $H$ with unusually high density. Then, we conclude the proof by a density increment type argument.
	
	The key observation that allows us to find short paths between pairs of vertices is that if $H$ has good expansion properties, then the $\sigma$-boundaries also expand, that is, $|\partial^{\sigma}_{G}(X)|\geq (1+\lambda')|X|$ for every $|X|\leq \frac{1}{2}|V(H)|$ for some $\lambda'=\Theta(\lambda)$. This implies that given $x,y\in V(H)$, we can reach a large proportion of the  vertices of $H$ from $x$ by a short $\sigma$-path. Also, if $\tau$ is the reverse of $\sigma$, we can reach a large proportion of the vertices of $H$ by a $\tau$-path starting with $y$. See Section \ref{sect:sigma} for details. Then, it remains to find some $z\in V(G)$ such that $z$ can be reached from $x$ by a $\sigma$-path $P_x$, $z$ can be reached by a $\tau$-path $P_y$ from $y$, and no vertex $u\in P_x\setminus\{z\}$ and $v\in P_y\setminus\{z\}$ share a coordinate.  
	
	\section{Expansion}\label{sect:expansion}
	
	Let us start with a simple, but very useful lemma about finding subgraphs of large minimum degree in $r$-line-graphs.
	
	\begin{lemma}\label{lemma:mindeg}
		If $G$ is an $r$-line-graph of density $d$, then $G$ contains a subgraph $H$ such that $\mbox{dens}(H)\geq d$ and $\delta(H)\geq\frac{d}{r}$.
	\end{lemma}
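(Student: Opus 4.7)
The plan is to mimic the classical graph-theoretic argument that a graph of average degree $d$ contains a subgraph of minimum degree at least $d/2$, with blocks playing the role of vertices. I will produce $H$ by an iterative peeling procedure: start with $G_0 := G$, and at each step, if $G_i$ contains a block $B$ of size less than $d/r$, delete all vertices of $B$ to obtain $G_{i+1}$; otherwise, stop and return $H := G_i$. Since $|V(G_i)|$ strictly decreases, the process terminates, and at termination every block has size at least $d/r$, which is precisely the minimum-degree condition $\delta(H) \geq d/r$.

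The main thing to verify is the density invariant $\mbox{dens}(G_i) \geq d$ throughout. Writing $n_i = |V(G_i)|$ and $p_i = p(G_i)$, when I delete a block $B$ of size $k_i < d/r$, the block $B$ is destroyed and no new blocks appear, so $p_{i+1} \leq p_i - 1$ (other blocks may also vanish if all their vertices lay in $B$, which only helps the density). Hence
\[
\mbox{dens}(G_{i+1}) = \frac{r(n_i - k_i)}{p_{i+1}} \geq \frac{r(n_i - k_i)}{p_i - 1}.
\]
To show this lower bound is at least $d$, I rearrange the desired inequality to $r n_i - d p_i \geq r k_i - d$. The left side is nonnegative by the inductive hypothesis $r n_i / p_i \geq d$, while the right side is strictly negative because $r k_i < d$, so the inequality holds and the invariant is preserved.

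It remains to confirm that $H$ is nonempty. If $d \leq r$, then $d/r \leq 1$ and every (necessarily nonempty) block already has size at least $1 \geq d/r$, so no deletions occur and $H = G$ works. If $d > r$, then the invariant $\mbox{dens}(G_i) \geq d > 0$ prevents $G_i$ from shrinking to the empty graph. I do not foresee a serious obstacle: the whole argument is a short block-counting computation, and the only point requiring care is remembering that a small $p_{i+1}$ helps (since $p$ sits in the denominator of the density), so the naive upper bound $p_{i+1} \leq p_i - 1$ coming from removing $B$ alone is already enough.
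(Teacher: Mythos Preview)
Your proposal is correct and follows essentially the same approach as the paper: iteratively delete a block of size below $d/r$, and verify via the inequality $r|V(G)| - r|B| > d\,p(G) - d$ that the density stays at least $d$, so the process terminates at a nonempty subgraph with the required minimum degree. The paper's computation is the one-line version of your rearranged inequality, and your extra remarks (that $p_{i+1}\le p_i-1$ because deleting vertices cannot split or create blocks, and the trivial case $d\le r$) only make the argument more explicit.
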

	
	\begin{proof}
		Repeat the following operation: if there exist $i\in [r]$ and $x\in V(G)$ such that $d_{i}(x)<\frac{d}{r}$, then delete the block $B$ containing $x$. We show that if the density of $G$ is at least $d$, then this operation increases the density. Indeed, if the resulting graph is $G'$, then $$\mbox{dens}(G')\geq\frac{r|V(G)|-r|B|}{p(G)-1}>\frac{dp(G)-d}{p(G)-1}=d.$$
		
		This implies that after repeating the operation described above a finite number of times, we end up with a nonempty graph $H$ with the desired properties.
	\end{proof}
	
		 If $G$ is a graph and $\lambda>0$, we say that $G$ is a \emph{$\lambda$-expander} if for every $X\subset V(G)$ satisfying $|X|\leq \frac{1}{2}|V(G)|$ we have $|N(X)|\geq \lambda |X|$. Note that having expansion for every set of size at most $\frac{1}{2}|V(G)|$ automatically implies the expansion of larger sets as well, as we show in the next claim.
	
	\begin{claim}\label{claim:eps_exp}
		Let $0<\lambda<1$, $\epsilon>0$ and let $G$ be a $\lambda$-expander. If $X\subset V(G)$ such that 
		$|X|\leq (1-\epsilon)|V(G)|$, then $|N(X)|\geq \frac{\lambda\epsilon}{2}|X|$.
	\end{claim}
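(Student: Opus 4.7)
The plan is to split on the size of $X$. If $|X| \leq |V(G)|/2$, the $\lambda$-expander hypothesis applies directly to $X$ and gives $|N(X)| \geq \lambda |X| \geq \frac{\lambda \epsilon}{2}|X|$ (we may clearly assume $\epsilon \leq 1$, since for $\epsilon \geq 1$ the hypothesis $|X| \leq (1-\epsilon)|V(G)|$ forces $X = \emptyset$ and the claim is vacuous).

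The substantive case is $|V(G)|/2 < |X| \leq (1-\epsilon)|V(G)|$, where $X$ is too large for the expander hypothesis to be applied to $X$ itself. Here I would pass to the ``second complement'' $Y = V(G) \setminus (X \cup N(X))$, namely the set of vertices that are neither in $X$ nor adjacent to $X$. By the very definition of $Y$, no vertex of $Y$ has a neighbor in $X$, which forces the inclusion $N(Y) \subseteq V(G) \setminus (X \cup Y) = N(X)$. Moreover $|Y| \leq |V(G)| - |X| < |V(G)|/2$, so the $\lambda$-expander hypothesis \emph{does} apply to $Y$ and yields $|N(X)| \geq |N(Y)| \geq \lambda |Y|$.

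Substituting $|Y| = |V(G)| - |X| - |N(X)|$ and rearranging produces a one-variable linear inequality of the shape $(1+\lambda)|N(X)| \geq \lambda(|V(G)| - |X|) \geq \lambda \epsilon |V(G)|$, from which the desired bound $|N(X)| \geq \frac{\lambda\epsilon}{2}|X|$ follows using $\lambda < 1$ (to absorb $1/(1+\lambda)$ into a factor of $1/2$) and the trivial bound $|X| \leq |V(G)|$. There is no real obstacle in this argument; the only content of the claim is the trick of applying expansion to the small set $Y$ of non-neighbors and passing the bound back to $N(X)$ via the inclusion $N(Y) \subseteq N(X)$.
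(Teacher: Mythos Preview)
Your proof is correct and follows essentially the same approach as the paper: both pass to the complement $Y = V(G)\setminus(X\cup N(X))$, observe $N(Y)\subseteq N(X)$, and apply the $\lambda$-expander hypothesis to the small set $Y$. The only cosmetic difference is that the paper argues by contradiction (assuming $|N(X)|<\frac{\lambda\epsilon}{2}|X|$ and bounding $|Y|$ below), whereas you substitute $|Y|=|V(G)|-|X|-|N(X)|$ and rearrange directly; the resulting inequalities are the same.
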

	
	\begin{proof}
		If $|X|\leq \frac{|V(G)|}{2}$, then this follows from the definition. Suppose that $|X|\geq \frac{|V(G)}{2}$ and that $|N(X)|< \frac{\lambda\epsilon}{2} |X|$. Let $Y=V(G)\setminus (X\cup N(X))$. Then $|Y|\leq \frac{|V(G)|}{2}$ so $|N(Y)|\geq \lambda|Y|$. But $|Y| = |V(G)|-|X|-|N(X)|\geq |V(G)|(\epsilon-\frac{\epsilon\lambda}{2})$ and $N(Y)\subset N(X)$, so 
		$$|N(X)|\geq |N(Y)|\geq  \lambda |V(G)|\left(\epsilon-\frac{\epsilon\lambda}{2}\right)\geq \frac{\epsilon\lambda}{2}|V(G)|,$$
		contradiction.
	\end{proof}
	
	Say that an $r$-line-graph $G$ is a \emph{$(\lambda,d)$-expander} if $G$ is $\lambda$-expander and $\delta(G)\geq d$. A result of Shapira and Sudakov \cite{SS15} tells us that every graph on $n$ vertices contains a $\lambda$-expander subgraph of roughly the same density, where $\lambda=\Theta(\frac{1}{\log n})$ (in their case, density refers to the usual notion of edge density; also, their notion of expansion is stronger). We use their approach to show that an $r$-line-graph $G$ also contains a $(\lambda,d)$-expander subgraph of roughly the same density, where $d=\Omega(\mbox{dens}(G))$.
	
	\begin{lemma}\label{lemma:expander}
		Let $G$ be an $r$-line-graph on $n$ vertices of density $d$, and let $0<\lambda\leq \frac{1}{2\log_{2} n}$. Then $G$ contains a subgraph $H$ of density at least $d(1-\lambda\log_{2} n)$ such that $H$ is $(\lambda,\frac{d}{2r})$-expander.
	\end{lemma}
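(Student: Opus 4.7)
I plan to prove the lemma by adapting the Koml\'os--Szemer\'edi / Shapira--Sudakov iterative expander-extraction technique to the block-density setting. The procedure maintains a nested sequence $G = G_0 \supseteq G_1 \supseteq \cdots \supseteq G_t = H$ of induced subgraphs, and at each step performs one of two cleanup operations on the current $G_i$. The first is \emph{block trimming}: if some block of $G_i$ has size less than $d/(2r)$, delete it; by the calculation in the proof of Lemma~\ref{lemma:mindeg}, this strictly increases the density, provided the invariant $\mathrm{dens}(G_i) \geq d/2$ holds. The second is \emph{expansion splitting}, which applies when $G_i$ is not a $\lambda$-expander: pick a witness $X \subseteq V(G_i)$ with $|X| \leq |V(G_i)|/2$ and $|N(X)| < \lambda |X|$, partition $V(G_i) = V_1 \sqcup V_2$ with $V_1 := X \cup N(X)$, and replace $G_i$ by whichever of $G_i[V_1], G_i[V_2]$ retains the larger density. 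When neither operation applies, the current graph is by construction both $\lambda$-expanding and of minimum block size $\geq d/(2r)$, and we output $H := G_i$.

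The heart of the argument is the density accounting in the expansion-splitting step. The structural fact to exploit is that no block of $G_i$ can simultaneously contain a vertex of $X$ and a vertex of $V_2$: since every block is a clique and every vertex of $V_2$ is non-adjacent to $X$, any block meeting $X$ lies entirely in $V_1 = X \cup N(X)$. Hence the only blocks of $G_i$ split between $V_1$ and $V_2$ are those that avoid $X$ entirely while meeting $N(X)$; a vertex-block incidence count gives at most $r|N(X)| < r\lambda|X|$ such blocks. By the mediant inequality applied to the two fractions $r|V_j|/p(G_i[V_j])$ for $j=1,2$, together with the identity $r(|V_1|+|V_2|) = r|V(G_i)|$, the retained side has density at least $(1 - O(\lambda))\,\mathrm{dens}(G_i)$; moreover, a suitable bookkeeping of which side to keep ensures $|V(G_{i+1})| \leq \tfrac{3}{4}|V(G_i)|$.

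For termination and the final bounds, observe that block trimming strictly decreases $p(G_i)$ and so happens at most finitely often between consecutive expansion-splittings, while expansion-splitting shrinks $|V(G_i)|$ by a definite multiplicative factor bounded below $1$ and hence occurs at most $\log_2 n$ times in total. Since density is non-decreasing under trimming and loses at most a multiplicative factor $1 - O(\lambda)$ per splitting, the output satisfies $\mathrm{dens}(H) \geq d(1-O(\lambda))^{\log_2 n} \geq d(1 - O(\lambda \log_2 n))$. The hypothesis $\lambda \leq \tfrac{1}{2\log_2 n}$ ensures the invariant $\mathrm{dens}(G_i) \geq d/2$ is preserved throughout, and thereby keeps the trimming threshold $d/(2r) \leq \mathrm{dens}(G_i)/r$ valid. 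The main obstacle is controlling the implicit constants in the density loss per splitting: the naive incidence count $|D| \leq r|N(X)|$ could yield a loss factor depending on $\mathrm{dens}(G_i)$ if combined carelessly with the mediant, so one must either refine the block count using the rigidity of the $r$-line-graph structure or rescale the internal parameter $\lambda$ by an $r$-dependent constant to recover the stated clean bound.
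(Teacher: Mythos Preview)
Your proposal has the right skeleton and correctly identifies the key structural fact (a block meeting $X$ cannot meet $V_2$), but the execution contains a real gap that you yourself flag at the end without resolving. The issue is your choice $V_1 = X \cup N(X)$. With this choice, blocks through $N(X)\setminus X$ can be split between $V_1$ and $V_2$, and your count of at most $r|N(X)| < r\lambda|X|$ split blocks is a \emph{vertex}-scale quantity. Feeding it into the mediant gives
\[
\max_j \mathrm{dens}(G_i[V_j]) \;\geq\; \frac{r|V(G_i)|}{p(G_i)+r\lambda|X|} \;=\; \frac{\mathrm{dens}(G_i)}{1+\tfrac{\lambda}{2}\mathrm{dens}(G_i)},
\]
so the loss per splitting is $\Theta(\lambda\,\mathrm{dens}(G_i))$, not $\Theta(\lambda)$; this blows up when the density is large and cannot be repaired by rescaling $\lambda$ by an $r$-dependent constant. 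Separately, your claim that ``suitable bookkeeping'' forces $|V(G_{i+1})|\leq \tfrac34|V(G_i)|$ is false: if $|X|$ is tiny and the denser side is $V_2$, then $|V_2|$ can be $(1-o(1))|V(G_i)|$.

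The paper's fix is small but decisive: take $U_1 = X$ (not $X\cup N(X)$) and $U_2 = V(G_i)\setminus(X\cup N(X))$, \emph{discarding} $N(X)$ in both alternatives. Now no block meets both $U_1$ and $U_2$, so $p(G_i[U_1])+p(G_i[U_2])\leq p(G_i)$ exactly, with no split blocks at all. One then shows the dichotomy: either $\mathrm{dens}(G_i[U_1])\geq (1-\lambda)\,\mathrm{dens}(G_i)$ with $|U_1|\leq |V(G_i)|/2$, or $\mathrm{dens}(G_i[U_2])\geq \mathrm{dens}(G_i)$ with no shrinking guaranteed. The halving steps number at most $\log_2 n$ and each costs a factor $(1-\lambda)$; the other steps cost nothing in density, and termination follows simply because at least one vertex is removed each round. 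This cleanly gives $\mathrm{dens}(H)\geq d(1-\lambda)^{\log_2 n}\geq d(1-\lambda\log_2 n)$, and interleaving with Lemma~\ref{lemma:mindeg} supplies the minimum-degree condition.
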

	
	\begin{proof}
		First, we show that if $G'$ is an $r$-line-graph of density $d'$ that is \emph{not} a $\lambda$-expander, then there exists $U\subset V(G')$, $U\neq V(G')$ such that either 
		\begin{enumerate}
			\item $|U|\leq \frac{1}{2}|V(G')|$ and $\mbox{dens}(G'[U])\geq d'(1-\lambda)$, or
			\item $\mbox{dens}(G'[U])\geq d'$.
		\end{enumerate}
		Indeed, if $G'$ is not a $\lambda$-expander, then there exists $W\subset V(G')$ such that $|W|\leq \frac{1}{2}|V(G')|$ and $|N(W)|\leq \lambda|W|$. We show that either $U_{1}=W$ satisfies 1., or $U_{2}=V(G')\setminus (N(W)\cup W)$ satisfies 2.. Suppose this is not true. Let $p_{j}=p(G'[U_{j}])$ for $j=1,2$. Note that $p(G')\geq p_{1}+p_{2}$. But then we can write
		\begin{align*}
		r|V(G')|&=r(|U_1|+|N(U_{1})|+|U_{2}|)\leq r|U_{1}|(1+\lambda)+r|U_{2}|\\
		&<d'(1-\lambda)(1+\lambda)p_{1}+d'p_{2}\leq d'(p_{1}+p_{2})\leq r|V(G')|,
		\end{align*}
		contradiction.
		
		By applying Lemma \ref{lemma:mindeg}, we can also conclude that there exists $U\subset V(G')$, $U\neq V(G')$ such that either 
		\begin{itemize}
			\item[1.$^{*}$] $|U|\leq \frac{1}{2}|V(G')|$, $\mbox{dens}(G'[U]))\geq d'(1-\lambda)$ and $\delta(G'[U])\geq\frac{d'(1-\lambda)}{r}$, or
			\item[2.$^{*}$] $d(G'[U])\geq d'$ and $\delta(G'[U])\geq \frac{d'}{r}$.
		\end{itemize}
		Starting with an $r$-line-graph $G$ of density $d$, replace $G$ with one of its subgraphs of minimum degree at least $d/r$ having density at least $d$. If current $G$ is not a $\lambda$-expander, we can find $U\subset V(G)$ satisfying 1.$^{*}$ or 2.$^{*}$. Replace $G$ with $G[U]$, and repeat the previous step until $G$ is a $\lambda$-expander or its density is less than $d/2$. The process must stop since we are deleting at least one vertex at every step. Let $H$ be the final graph and let $\ell$ be the number of steps of kind  1.$^{*}$ that we made. Then  $|V(H)|\leq |V(G)|2^{-\ell}$ and therefore $\ell\leq \log_{2}n$. This implies that
		$\mbox{dens}(H)\geq d(1-\lambda)^{\ell}\geq d(1-\lambda\log_{2}n)\geq d/2$, $\delta(H)\geq \frac{d(1-\lambda\log_{2}n)}{r}\geq \frac{d}{2r}$ and $H$ is $\lambda$-expander.
	\end{proof}
	
	From this, we can immediately conclude that we can cover almost every vertex of an $r$-line-graph $G$ with disjoint expander subgraphs.
	
	\begin{corollary}\label{lemma:expander_covering}
		Let $\epsilon>0$. Let $G$ be an $r$-line-graph on $n$ vertices of density at least $d$, and let $\lambda\leq \frac{1}{2\log_2 n}$. Then $G$ contains vertex disjoint subgraphs $G_{1},\dots,G_{k}$ such that $G_{i}$ is a $(\lambda,\frac{\epsilon d}{2r})$-expander for $i\in [k]$, and $|\bigcup_{i=1}^{k} V(G_{i})|\geq (1-\epsilon)n$. 
	\end{corollary}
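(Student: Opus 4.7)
The plan is to build the expanders $G_1, \ldots, G_k$ one at a time by greedily applying Lemma \ref{lemma:expander} to the remaining graph. Initialize $U_0 = \emptyset$. At step $i \geq 1$, if $|U_{i-1}| \geq (1-\epsilon)n$ then stop; otherwise set $G^{(i)} = G[V(G) \setminus U_{i-1}]$, use Lemma \ref{lemma:expander} to extract an expander subgraph $G_i \subseteq G^{(i)}$, and update $U_i = U_{i-1} \cup V(G_i)$.

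The crux is to verify that the density of the leftover graph $G^{(i)}$ stays above $\epsilon d$, so that Lemma \ref{lemma:expander} yields a $(\lambda, \epsilon d/(2r))$-expander. The key observation is that passing from an $r$-line-graph to an induced subgraph can only \emph{decrease} the block count: each $i$-block of $G^{(i)}$ is the intersection with $V(G^{(i)})$ of some $i$-block of $G$, so blocks either shrink or disappear, but no new blocks appear. Hence $p(G^{(i)}) \leq p(G) \leq rn/d$ (using $\mbox{dens}(G) \geq d$), and while the process has not stopped we have $|V(G^{(i)})| > \epsilon n$, so
$$\mbox{dens}(G^{(i)}) \;=\; \frac{r|V(G^{(i)})|}{p(G^{(i)})} \;\geq\; \frac{r|V(G^{(i)})|\,d}{rn} \;>\; \epsilon d.$$

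Next, since $|V(G^{(i)})| \leq n$, the hypothesis $\lambda \leq \frac{1}{2 \log_2 n}$ implies $\lambda \leq \frac{1}{2\log_2 |V(G^{(i)})|}$, so Lemma \ref{lemma:expander} applies to $G^{(i)}$ and produces an induced subgraph $G_i$ that is a $\lambda$-expander with $\delta(G_i) \geq \mbox{dens}(G^{(i)})/(2r) > \epsilon d/(2r)$. Therefore $G_i$ is a $(\lambda, \epsilon d/(2r))$-expander, and it is vertex-disjoint from all previously chosen expanders by construction.

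Finally, the process must terminate: each $G_i$ contains at least one block of size $\geq \epsilon d/(2r) \geq 1$ (assuming $\epsilon d \geq 2r$; otherwise the conclusion is trivial with $k = 0$), so $|V(G_i)| \geq 1$ and at least one new vertex is absorbed into $U_i$ at each step. On termination, $|U_k| \geq (1-\epsilon)n$, giving the required cover. I do not anticipate any substantive obstacle; the only subtlety is the monotonicity of the block count under passing to induced subgraphs, which is immediate from the coordinate-wise definition of a block.
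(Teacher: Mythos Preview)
Your proposal is correct and follows essentially the same approach as the paper: a greedy extraction using Lemma \ref{lemma:expander}, with the key observation that passing to an induced subgraph cannot increase the block count, so the leftover graph retains density at least $\epsilon d$. One small quibble: your parenthetical ``otherwise the conclusion is trivial with $k=0$'' is not quite right (that would require $\epsilon\geq 1$), but termination holds anyway since Lemma \ref{lemma:expander} always returns a nonempty subgraph.
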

	
	\begin{proof}
	We will greedily find the subgraphs $G_1,\dots,G_k$ as follows. Suppose that we have already found $G_{1},\dots,G_{j}$, and let $H$ be the subgraph of $G$ induced on $V(G)\setminus \bigcup_{i=1}^{j}V(G_{i})$. If $|V(H)|\leq \epsilon n$, then stop, otherwise define $G_{j+1}$ as follows. The number of blocks of $H$ is at most $p(G)$, so the density of $H$ is at least $\frac{r\epsilon n}{p(G)}=\epsilon d.$ But then $H$ contains a $(\lambda,\frac{\epsilon d}{2r})$-expander subgraph by Lemma \ref{lemma:expander}, let this subgraph be $G_{j+1}$.
	\end{proof}
	

	Let $G$ be an $r$-line-graph with $V(G)\subset A_1\times \dots\times A_r$. If $u\in A_1\cup \dots\cup A_r$, \emph{the deletion of $u$ from $G$} means that we remove all vertices of $G$ with one coordinate equal to $u$. Next, we show that our notion of $(\lambda,d)$-expansion is robust, which means that after the deletion of a few coordinates of a good expander, the resulting graph is still a good expander.
	
	\begin{lemma}\label{lemma:expander_robust}
		Let $u,d$ be positive integers.  Let $G$ be an $r$-line-graph on $n$ vertices with $V(G)\subset A_1\times \dots\times A_r$ such that the minimum degree of $G$ is $\delta$.  Let $H$ be the subgraph of $G$ we get after deleting at most $u$ elements of $A_1\cup \dots\cup A_r$ from $G$. Then $H$ is an $r$-line-graph of minimum degree at least $\delta-u$ on at least $(1-\frac{u}{\delta})n$ vertices.
		
		If in addition $G$ is $\lambda$-expander, $\lambda\leq 1$ and $u\leq \frac{\lambda \delta}{4r}$, then $H$ is a $(\frac{\lambda}{2},\frac{d}{2})$-expander.
	\end{lemma}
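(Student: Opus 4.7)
The plan is to handle the two parts separately. For the first (the bounds on minimum degree and vertex count), I would use the following observation: since any surviving vertex $x \in V(H)$ has no deleted coordinate, the $i$-block of $x$ in $G$ can only lose vertices whose $i$-th coordinate is deleted (vertices shared with $x$ in the other coordinates survive), giving $d^{(i)}_H(x) \geq \delta - u$. For the vertex count, I would note that the $i$-blocks of $G$ partition $V(G)$ and each has size $\geq \delta$, so there are at most $n/\delta$ of them; since each deleted element of $A_i$ kills at most one vertex per $i$-block, at most $un/\delta$ vertices are removed in total, yielding $|V(H)| \geq (1-u/\delta)n$.

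For the expansion claim, let $R = V(G) \setminus V(H)$ and fix $X \subseteq V(H)$ with $|X| \leq |V(H)|/2 \leq n/2$. Applying the $\lambda$-expansion of $G$ gives $|N_G(X)| \geq \lambda|X|$, and since $N_H(X) = N_G(X) \setminus R$, the task reduces to bounding $|N_G(X) \cap R|$. The main obstacle is that the two naive estimates, $|N_G(X) \cap R| \leq u|X|$ and $|N_G(X) \cap R| \leq |R| \leq un/\delta$, each fall short in some regime (the first when $u > \lambda/2$, the second when $|X| < n/(2r)$). The central step will be the combined bound
\[
|N_G(X) \cap R| \leq \frac{u(|N_G(X)| + |X|)}{\delta}.
\]

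To establish this, I would fix a deleted element $w \in A_j$ and analyze which $y \in N_G(X)$ can have $y_j = w$. Such a $y$ is $G$-adjacent to some $x \in X$, differing in exactly one coordinate $k$; since $y_j = w$ is deleted but $x$ has no deleted coordinate, we must have $k = j$, so $y$ lies in the $j$-block of $x$ in $G$. Consequently the number of such $y$'s is at most $t_j$, where $t_j$ counts the $j$-blocks of $G$ meeting $X$. Using the minimum-degree bound in $G$, I would then argue $|N^{(j)}_G(X)| \geq t_j\delta - |X|$, giving $t_j \leq (|N^{(j)}_G(X)| + |X|)/\delta \leq (|N_G(X)| + |X|)/\delta$; summing the contributions $u_j t_j$ with $\sum_j u_j = u$ yields the displayed bound.

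The finish is then routine arithmetic: combining the bound with $|N_G(X)| \geq \lambda|X|$ and the hypothesis $u \leq \lambda\delta/(4r)$ gives
\[
|N_H(X)| \geq \lambda|X| - \frac{(\lambda+1)u}{\delta}|X| \geq \lambda|X|\left(1 - \frac{1}{2r}\right) \geq \frac{\lambda}{2}|X|,
\]
using $\lambda \leq 1$ and $r \geq 1$. Similarly $\delta(H) \geq \delta - u \geq \delta(1 - \lambda/(4r)) \geq \delta/2$, completing the proof. The only subtle point is leveraging the minimum-degree of $G$ to control the number of blocks touching $X$ in each direction; once that is in place, everything else is bookkeeping.
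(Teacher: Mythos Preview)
Your proof is correct. The first part (minimum degree and vertex count) is essentially identical to the paper's argument. For the expansion part, however, you take a genuinely different route.

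The paper works coordinate by coordinate: for each $i$ it compares $|N_H^{(i)}(X)|$ with $|N_G^{(i)}(X)|$ via the block bound $|B\cap V(H)|\ge (1-\tfrac{u}{\delta})|B|$, obtaining
\[
|N_H^{(i)}(X)|\ge |N_G^{(i)}(X)|-\tfrac{\lambda}{4r}\bigl(|X|+|N_G^{(i)}(X)|\bigr),
\]
and then splits into two cases according to whether some $|N_G^{(i)}(X)|\ge |X|$ (in which case that single coordinate already gives expansion) or all are smaller (in which case one sums the losses over $i$). Your argument bypasses this case analysis entirely: by observing that any deleted $y\in N_G(X)$ with $y_j$ removed must lie in the $j$-block of its witness $x\in X$, you bound $|N_G(X)\cap R|$ directly by $\sum_j u_j t_j\le \tfrac{u}{\delta}(|N_G(X)|+|X|)$, and finish in one line using monotonicity of $N\mapsto N(1-u/\delta)$. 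This is cleaner and slightly sharper in its use of the hypothesis (you only need $r\ge 1$ at the end, not the full factor of $r$ in the case split). The paper's per-coordinate inequality, on the other hand, is formulated so as to be reusable in Section~\ref{sect:sigma}, where the $i$-neighbourhoods are tracked separately. One small presentational point: in your final chain you silently use that $(1-u/\delta)>0$ to substitute the lower bound $|N_G(X)|\ge\lambda|X|$; this is immediate from $u\le \lambda\delta/(4r)\le \delta/4$, but worth stating.
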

	
	\begin{proof}
		If $x\in V(H)$, then at most $u$ neighbors of $x$ are deleted, so it is clear that the minimum degree of $H$ is at least $\delta-u$. Let $U$ be the set of deleted elements and let $U_i=A_i$ for $i\in [r]$. The number of vertices $(a_1,\dots,a_r)\in V(G)$ such that $a_{i}\in U_i$ for some $i\in [r]$ is at most $\frac{|U_{i}|}{\delta}n$, as each $i$-block of $G$ contains at least $\delta$ vertices of which at most $|U_i|$ has its $i$-th coordinate in $U_i$. Therefore, the total number of deleted vertices is at most $\frac{|U_1|+\dots+|U_r|}{\delta}n\leq\frac{u}{\delta}n.$
		
		It remains to show that $H$ is a $\frac{\lambda}{2}$-expander. Let $X\subset V(H)$ such that $|X|\leq \frac{1}{2}|V(H)|$. As $G$ is $\lambda$-expander, we have $|N_{G}(X)|\geq \lambda |X|$. Let $i\in [r]$, and let $\mathcal{B}$ be the family of $i$-blocks of $G$ having a nonempty intersection with $X$. Then the $i$-blocks of $H$ intersecting $X$ are $V(H)\cap B$ for $B\in \mathcal{B}$. But here, we have
		$$|V(H)\cap B|\geq |B|-u\geq |B|\left(1-\frac{u}{\delta}\right)\geq |B|\left(1-\frac{\lambda}{4r}\right).$$
		Note that 
		$$|N_{G}^{(i)}(X)|+|X|=\sum_{B\in \mathcal{B}}|B|,$$
		so
		$$|N_{H}^{(i)}(X)|+|X|=\sum_{B\in\mathcal{B}}|B\cap V(H)|\geq \left(1-\frac{\lambda}{4r}\right)\sum_{B\in\mathcal{B}}|B|= \left(1-\frac{\lambda}{4r}\right)(|N_G^{(i)}(X)|+|X|).$$
		From this, we get
		\begin{equation}\label{equ:robust}
		|N_{H}^{(i)}(X)|\geq |N_{G}^{(i)}(X)|-\frac{\lambda}{4r}(|X|+|N_{G}^{(i)}(X)|) .
		\end{equation}
		Consider two cases.
		\begin{itemize}
		    \item[Case 1.] There exists $i\in [r]$ such that $|N_{G}^{(i)}(X)|\geq |X|$. In this case (\ref{equ:robust}) implies that $$|N_{H}(X)|\geq |N_{H}^{(i)}(X)|\geq \left(1-\frac{\lambda}{2r}\right)|N^{(i)}_{G}(X)|>\frac{1}{2}|X|.$$
		    \item[Case 2.] For every $i\in [r]$, we have $|N_{G}^{(i)}(X)|< |X|$. Then (\ref{equ:robust}) implies that $|N_{H}^{(i)}(X)|\geq |N^{(i)}_G(X)|-\frac{\lambda}{2r}|X|$. But then 
		    $$|N_{H}(X)|=\left|\bigcup_{i\in [r]} N^{(i)}_H(X)\right|\geq \left|\bigcup_{i\in [r]} N^{(i)}_G(X)\right|-r\cdot\frac{\lambda}{2r}|X|=|N_G(X)|-\frac{\lambda}{2}|X|\geq \frac{\lambda}{2}|X|.$$
		\end{itemize}
	 Therefore, $H$ is a $\frac{\lambda}{2}$-expander.
	\end{proof}
	
	\section{$\sigma$-expansion}\label{sect:sigma}
	
	Let $G$ be an $r$-line-graph and let $X\subset V(G)$. Given a permutation $\sigma\in S_r$, the \emph{$\sigma$-boundary }of $X$ is defined as $$\partial_{G}^{\sigma}(X)=\partial^{\sigma}(X)=\partial^{(\sigma(r))}(\dots\partial^{(\sigma(2))}(\partial^{(\sigma(1))}(X))\dots).$$ If $x\in V(G)$ and $y\in \partial^{\sigma}(x)$, say that \emph{$y$ is a $\sigma$-neighbor of $x$ in $G$}. Note that being a $\sigma$-neighbour is not necessarily a symmetric relation.
	
	In this section, we show that if $G$ has good expansion properties, then the $\sigma$-boundaries also expand. We prove even more: even if one deletes a few $\sigma$-neighbours of every $x\in V(G)$ arbitrarily, the $\sigma$-boundaries still expand. This motivates the following definition.
	
	Suppose that $V(G)\subset A_{1}\times\dots\times A_{r}$. For every $x\in V(G)$, let $F(x)\subset A_1\cup \dots\cup A_r$ be a (possibly empty) set of forbidden coordinates, and let $\mathcal{F}=(F(x))_{x\in V(G)}$. Given $X\subset V(G)$, define $\partial^{\sigma}_G(X,\mathcal{F})=\partial^{\sigma}(X,\mathcal{F})$ as the set of vertices $y$ for which there exists $x\in X$ such that $y$ is a $\sigma$-neighbor of $x$, and no coordinate of $y$ is in $F(x)$.
	
	\begin{lemma}\label{lemma:sigma_expansion}
		Let $\sigma\in S_r$, let $0<\epsilon,\lambda<1$, and let $d,u$ be a positive integers such that $100r^2 u\leq \epsilon d\lambda$. Let $G$ be an $r$-line-graph such that $G$ is a $(\lambda,d)$-expander, and let $\mathcal{F}=(F(x))_{x\subset V(G)}$ such that $|F(x)|\leq u$ for every $x\in V(G)$. Then for every $X\subset V(G)$ satisfying $|X|\leq (1-\epsilon)|V(G)|$, we have $$|\partial^{\sigma}(X,\mathcal{F})|\geq \left(1+\frac{\epsilon\lambda}{4r}\right)|X|.$$
	\end{lemma}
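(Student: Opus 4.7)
To prove the lemma I would build up $\partial^\sigma(X,\mathcal{F})$ iteratively, maintaining for each $z\in X_i$ a ``representative'' $x_i(z)\in X$ that certifies a valid length-$i$ partial $\sigma$-chain from $X$ to $z$ avoiding the forbidden coordinates. Concretely, set $X_0:=X$ and $x_0(z):=z$, and for $i=1,\ldots,r$ define
\[
X_i:=\{z'\in V(G):\exists\,z\in X_{i-1},\,z\neq z',\,z_k=z'_k\text{ for }k\neq\sigma(i),\,z'_{\sigma(i)}\notin F(x_{i-1}(z))\},
\]
and for each $z'\in X_i$ fix one witness $z$ and set $x_i(z'):=x_{i-1}(z)$. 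By induction every $z'\in X_r$ is the endpoint of a $\sigma$-chain of length $r$ starting at $x_r(z')\in X$ none of whose coordinates lie in $F(x_r(z'))$, so $X_r\subseteq \partial^\sigma(X,\mathcal{F})$ and it suffices to lower bound $|X_r|$.

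The core is a per-step estimate. Fix $i$, let $\mathcal{B}_i$ be the collection of $\sigma(i)$-blocks of $G$ meeting $X_{i-1}$, and write $n_i:=|N^{(\sigma(i))}(X_{i-1})|$. For any $B\in\mathcal{B}_i$, the $\sigma(i)$-coordinates of vertices inside $B$ are pairwise distinct, so choosing any $z^*\in B\cap X_{i-1}$ one sees that every $z\in B\setminus\{z^*\}$ excluded from $X_i$ must satisfy $z_{\sigma(i)}\in F(x_{i-1}(z^*))$; hence at most $u+1$ vertices of $B$ are excluded and $|B\cap X_i|\geq |B|-u-1$. Summing over $\mathcal{B}_i$ and using $\sum_B|B|=|X_{i-1}|+n_i$, $|\mathcal{B}_i|\leq (|X_{i-1}|+n_i)/d$, and $u+1\leq \epsilon d\lambda/(50r^2)$ (from the hypothesis and $u\geq 1$) collapses to
\[
|X_i|\geq \alpha(|X_{i-1}|+n_i),\qquad \alpha:=1-\tfrac{\epsilon\lambda}{50r^2}.
\]
Unrolling and using $\alpha^r\geq 1-\epsilon\lambda/(50r)$ gives $|X_r|\geq (1-\epsilon\lambda/(50r))\bigl(|X|+\sum_{i=1}^{r}n_i\bigr)$, so the lemma reduces to producing the lower bound $\sum_i n_i\geq \epsilon\lambda|X|/(3r)$.

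For this I would split into cases by whether $|X_{i-1}|\geq (1-\epsilon/2)|V(G)|$ occurs at some step. If so, the per-step inequality propagating forward combined with $|X|\leq (1-\epsilon)|V(G)|$ already forces $|X_r|\geq (1+\epsilon\lambda/(4r))|X|$ directly. Otherwise $|X_{i-1}|\leq (1-\epsilon/2)|V(G)|$ throughout, and Claim~\ref{claim:eps_exp} applied to $X_0$ gives $|N(X_0)|\geq (\epsilon\lambda/2)|X|$, so some coordinate $j^*\in[r]$ satisfies $|N^{(j^*)}(X_0)|\geq \epsilon\lambda|X|/(2r)$. Setting $i^*:=\sigma^{-1}(j^*)$, I would argue that either $n_{i^*}$ inherits a large share of $|N^{(j^*)}(X_0)|$ or the earlier $n_j$'s themselves sum to the required quantity: the per-step estimates force $|X_j\Delta X_{j-1}|\leq O(n_j+\epsilon\lambda|X|/r^2)$, so if $\sum_{j<i^*}n_j$ is below target then $X_{i^*-1}$ stays close to $X_0$.

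The main obstacle is exactly this last comparison between $|N^{(j^*)}(X_{i^*-1})|$ and $|N^{(j^*)}(X_0)|$: the $j^*$-neighbourhood is not Lipschitz in the underlying set, since a single lost vertex can in principle destroy a whole block's contribution. I would overcome this by charging losses to entire $j^*$-blocks rather than to individual vertices, bounding the number of ``lost'' $j^*$-blocks by $|X_0\setminus X_{i^*-1}|$ and using $\delta(G)\geq d$ to control what each such block can remove from $N^{(j^*)}$. The constants in the hypothesis $100r^2u\leq \epsilon d\lambda$ are tuned exactly so that this double accounting, combined with the per-step inequality, closes with the required slack $\epsilon\lambda|X|/(4r)$.
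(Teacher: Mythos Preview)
Your per-step estimate $|X_i|\ge \alpha(|X_{i-1}|+n_i)$, its unrolling, and the treatment of the case where some $|X_{i-1}|\ge (1-\epsilon/2)|V(G)|$ are all fine. The genuine gap is in the final step: you cannot recover a lower bound on $n_{i^*}=|N^{(j^*)}(X_{i^*-1})|$ from $|N^{(j^*)}(X_0)|$ by the block-charging you describe. You correctly bound the number of ``lost'' $j^*$-blocks (those meeting $X_0$ but not $X_{i^*-1}$) by $|X_0\setminus X_{i^*-1}|$, but what a lost block removes from $N^{(j^*)}$ is $|B\setminus X_0|$, and $\delta(G)\ge d$ gives only a \emph{lower} bound on $|B|$, not an upper bound. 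A single vertex of $X_0\setminus X_{i^*-1}$ sitting alone in a huge $j^*$-block can wipe out an arbitrarily large piece of $N^{(j^*)}(X_0)$, so the loss is not controlled by $|X_0\setminus X_{i^*-1}|$ at all. The constants in $100r^2u\le \epsilon d\lambda$ do not rescue this, because the obstruction has nothing to do with $u$.

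The paper sidesteps the whole comparison problem by building nesting into the iteration. At step $i$ it checks whether $|N^{(\sigma(i))}(X_{i-1})|\ge \frac{\epsilon\lambda}{3r}|X_{i-1}|$. If so (``expansion happened''), it takes $X_i=\partial^{(\sigma(i))}(X_{i-1},\mathcal{F}_{i-1})$ as you do; a single such step already gives the required gain. If not, it instead takes $X_i=X_{i-1}\cap \partial^{(\sigma(i))}(X_{i-1},\mathcal{F}_{i-1})$, and a separate claim (using that $|X_{i-1}\cup N^{(\sigma(i))}(X_{i-1})|\le 2|X_{i-1}|$ forces few blocks, each losing at most $u+1$ vertices) shows this still keeps a $(1-4u/d)$ fraction. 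If expansion never happens one obtains $X_r\subset X_{r-1}\subset\cdots\subset X_0$, and then for every $j$ the containment $X_r\cup N^{(j)}(X_r)\subset X_{i-1}\cup N^{(j)}(X_{i-1})$ with $j=\sigma(i)$ bounds $|N^{(j)}(X_r)|$ directly, summing to $|N(X_r)|<\frac{\epsilon\lambda}{2}|X_r|$ and contradicting Claim~\ref{claim:eps_exp}. The point is that nesting lets you control the $j$-neighbourhood of one fixed set $X_r$ in \emph{every} direction $j$ simultaneously, which is exactly what your comparison of $X_{i^*-1}$ with $X_0$ in one direction cannot deliver.
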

	
	\begin{proof}
		Write $t=\frac{u}{d}$, then $t\leq \frac{\epsilon \lambda}{100r^{2}}$. Let $i\in [r]$ and $X\subset V(G)$. We start the proof with two simple claims. Let $\mathcal{F}'=(F'(x))_{x\in V(G)}$ be some family of sets of forbidden coordinates, and define $\partial^{(i)}(X,\mathcal{F}')$ as the set of vertices $y$ which have an $i$-neighbor $x\in X$ such that the $i$-th coordinate of $y$ is not in $F'(x)$. Suppose that $|F'(x)|\leq u$ for every $x\in V(G)$.
		
		\begin{claim}\label{claim:expansion}
			$$|\partial^{(i)}(X,\mathcal{F}')|\geq (1-4t)|X\cup N^{(i)}(X)|\geq (1-4t)|X|.$$
		\end{claim}
		\begin{proof}
			It is enough to show the first inequality. Let $\mathcal{B}$ be the set of $i$-blocks in $G$ having a nonempty intersection with $X$. Clearly, $\partial^{(i)}(X,\mathcal{F}')$ is the disjoint union of the sets $\partial^{(i)}(B\cap X,\mathcal{F}')$ for $B\in\mathcal{B}$, and $X\cup N^{(i)}(X)$ is the disjoint union of the sets $B\in \mathcal{B}$. Therefore, it is enough to show that $|\partial^{(i)}(B\cap X,\mathcal{F}')|\geq (1-4t)|B|$. But if $x\in B\cap X$, then 
			$$|\partial^{(i)}(B\cap X,\mathcal{F}')|\geq |B|-1-|F(x)|\geq |B|-1-dt\geq |B|(1-4t),$$
			where the third inequality holds noting that $|B|\geq d$.
		\end{proof}
		\begin{claim}\label{claim:nonexpansion}
			If $|X\cup N^{(i)}(X)|\leq 2|X|$, then $|X\cap \partial^{(i)}(X,\mathcal{F}')|\geq (1-4t)|X|$.
		\end{claim}
		\begin{proof}
			As before, let $\mathcal{B}$ be the set of $i$-blocks in $G$ having a nonempty intersection with $X$. Note that $$ |X\cup N^{(i)}(X)|=\sum_{B\in\mathcal{B}}|B|\geq d|\mathcal{B}|,$$
			so we get $|\mathcal{B}|\leq \frac{2|X|}{d}$.
			
			Let $B\in \mathcal{B}$ and $x\in B\cap X$. Clearly, 
			$$|\partial^{(i)}(B\cap X,\mathcal{F}')\cap X|\geq |B\cap X|-1-|F(x)|\geq |B\cap X|-1-dt.$$
			Therefore, we have
			$$|\partial^{(i)}(X,\mathcal{F}')\cap X|\geq \sum_{B\in\mathcal{B}}(|B\cap X|-1-dt)=|X|-2dt|\mathcal{B}|\geq |X|(1-4t),$$
			where the second inequality holds by $|\mathcal{B}|\leq \frac{2|X|}{d}$.
		\end{proof}
		Without loss of generality, suppose that $\sigma$ is the permutation $12\dots r$, and let $X\subset V(G)$ such that $|X|\leq (1-\epsilon) |V(G)|$. Let $X_{0}=X$, $\mathcal{F}_{0}=\mathcal{F}$, and in what follows we define $X_{1},\dots,X_{r}$ and $\mathcal{F}_1,\dots,\mathcal{F}_r$. Suppose that $X_{i},\mathcal{F}_{i}=(F_{i}(x))_{x\in V(G)}$ is already defined for some $i\in \{0,\dots,r-1\}$, then define $X_{i+1},\mathcal{F}_{i+1}$ as follows. Consider two cases. 
		
		First, suppose that  $|N^{(i+1)}(X_{i})|\geq \frac{\epsilon\lambda}{3r}|X_{i}|$, and in this case say that \emph{expansion happened}. Let $X_{i+1}=\partial^{(i+1)}(X_{i},\mathcal{F}_i)$. Then $|X_{i+1}|\geq (1-4t)(1+\frac{\epsilon\lambda}{3r})|X_{i}|$ by Claim \ref{claim:expansion}. On the other hand, if $|N^{(i+1)}(X_{i})|< \frac{\epsilon\lambda}{3r}$, then let $X_{i+1}=X_{i}\cap \partial^{(i+1)}(X_{i},\mathcal{F}_i)$. Then $|X_{i+1}|\geq (1-4t)|X_i|$ by Claim \ref{claim:nonexpansion}. In each case, for each $y\in X_{i+1}$, let $F_{i+1}(y)=F_{i}(x)$, where $x\in X_i$ is an arbitrary $(i+1)$-neighbor of $y$ for which no coordinate of $y$ is in $F_{i}(x)$, and set $F_{i+1}(y)=\emptyset$ for every $y\in V(G)\setminus X_{i+1}$.
		
		Note that $X_{r}\subset \partial^{\sigma}(X,\mathcal{F})$, so it is enough to show that $|X_{r}|\geq (1+\frac{\epsilon\lambda}{4r})|X|$. Observe that if expansion happened even once, then we have 
		$$|X_{r}|\geq (1-4t)^{r}\left(1+\frac{\epsilon\lambda}{3r}\right)|X|\geq (1-4rt) \left(1+\frac{\epsilon\lambda}{3r}\right)|X| \geq
		\left(1+\frac{\epsilon\lambda}{4r}\right)|X|,$$
		where the second inequality holds as $t\leq \frac{\epsilon \lambda}{100r^{2}}$. Therefore, it remains show that expansion must have happened. Suppose otherwise, then $X_{r}\subset X_{r-1}\subset\dots\subset X_{0}$ and $|X_{r}|\geq (1-4t)^{r}|X|\geq (1-4rt)|X|.$ Observe that for $i\in \{0\dots,r-1\}$, we have $X_{r}\cup N^{(i+1)}(X_{r})\subset X_{i}\cup N^{(i+1)}(X_{i})$. Therefore,
		$$|N^{(i+1)}(X_{r})|\leq |X_{i}|+|N^{(i+1)}(X_{i})|-|X_{r}|\leq |X|+\frac{\epsilon\lambda}{3r}|X|-(1-4rt)|X|=\left(4rt+\frac{\epsilon\lambda}{3r}\right)|X|, $$
		where the second inequality holds as expansion did not happen. But then
		$$|N(X_{r})|\leq\sum_{i=1}^{r}|N^{(i)}(X_{r})|\leq \left(4r^2t+\frac{\epsilon\lambda}{3}\right)|X|<\frac{\epsilon\lambda}{2} |X_{r}|,$$
		which contradicts that $G$ is a $\lambda$-expander and Claim \ref{claim:eps_exp}. Therefore, expansion must have happened for some $0 \leq i \leq r-1$, finishing the proof.
	\end{proof}

	Let $G$ be an $r$-line-graph such that $V(G)\subset A_{1}\times \dots\times A_{r}$, and let $\sigma\in S_r$. Say that a sequence $a_{1},\dots,a_{rk}$ of distinct elements of $A_{1}\cup \dots\cup A_{r}$ is a \emph{$\sigma$-path} in $G$ if 
	\begin{enumerate}
		\item $a_{i}\in A_{\sigma(j)}$, where $j\equiv i \pmod{r}$,
		\item $(a_{i},\dots,a_{i+r-1})\in V(G)$ for $i=1,\dots,rk-r+1$.
	\end{enumerate}
	Note that the sequence $a_{1},\dots,a_{rk}$ corresponds to a tight path in the hypergraph identified with $G$.
	
	Also, if $x_{1},\dots,x_{k}$ is a sequence of vertices of $G$, say that $x_{1},\dots,x_{k}$ forms a \emph{$\sigma$-path} if $a_{1},\dots,a_{rk}$ is a $\sigma$-path, where $a_{ri-r+1},\dots,a_{ri}$ are the coordinates of $x_{i}$ (in the order given by $\sigma$). Also, if $x,y\in V(G)$, say that \emph{$y$ can be reached from $x$ by a $\sigma$-path}, if there exists a $\sigma$-path $x_{1},\dots,x_{k}$ such that $x_{1}=x$ and $x_{k}=y$. Note that the statement that $y$ can be reached from $x$ by a $\sigma$-path is equivalent to the statement that $x$ can be reached from $y$ by a $\tau$-path, where $\tau$ is the reverse of $\sigma$ (that is, $\tau(i)=\sigma(r+1-i)$ for $i\in [r]$). The \emph{size} of a $\sigma$-path $x_{1},\dots,x_{k}$ is $k$.
	
	\begin{lemma}\label{lemma:paths}
		Let $\sigma\in S_{r}$, let $\epsilon,\lambda>0$ and let $n,d$ be positive integers such that $500r^4\log n<\epsilon^2\lambda^2 d$. Let $G$ be an $r$-line-graph on $n$ vertices that is a $(\lambda,d)$-expander, and let $x\in V(G)$. Then at least $(1-\epsilon)n$ vertices of $G$ can be reached by a $\sigma$-path of size at most $\frac{5r\log n}{\epsilon\lambda}$ from $x$.
	\end{lemma}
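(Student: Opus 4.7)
The plan is to iteratively apply Lemma \ref{lemma:sigma_expansion}, growing the set of vertices reachable from $x$ by short $\sigma$-paths, while using the family $\mathcal{F}$ to record the coordinates already used along each such path. This guarantees that when we further expand, the extensions produce genuine $\sigma$-paths (no coordinate is repeated).

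Set $T = \lfloor \tfrac{5r\log n}{\epsilon\lambda}\rfloor$ and $u = rT$. Initialize $X_0 = \{x\}$ and let $F_0(x)$ be the set of $r$ coordinates of $x$. Inductively, given $X_i$ and $\mathcal{F}_i = (F_i(y))_{y\in X_i}$, define $X_{i+1} = \partial^{\sigma}_G(X_i,\mathcal{F}_i)$; for each $z\in X_{i+1}$, fix some witness $y\in X_i$ such that $z$ is a $\sigma$-neighbor of $y$ and no coordinate of $z$ lies in $F_i(y)$, and set $F_{i+1}(z) = F_i(y)\cup\{\text{coordinates of }z\}$. The key invariant, maintained by induction, is that every $z\in X_i$ is reachable from $x$ by a $\sigma$-path of size exactly $i+1$ and $F_i(z)$ is the set of $r(i+1)$ coordinates appearing along one such path. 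Indeed, appending $z$ to the $\sigma$-path witnessing $y\in X_i$ keeps all $r(i+2)$ coordinates distinct, precisely because $z$'s coordinates avoid $F_i(y)$, and $z$'s coordinates are automatically disjoint from $y$'s (as $z$ is a $\sigma$-neighbor of $y$). In particular, $|F_i(z)| = r(i+1) \leq rT = u$ for all $i\leq T$.

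Now for each $i< T$, either $|X_i|\geq (1-\epsilon)n$, in which case we stop and we are done since each vertex of $X_i$ is reached by a $\sigma$-path of size at most $T+1 \leq \tfrac{5r\log n}{\epsilon\lambda}$ (adjusting constants absorbs the $+1$), or we may apply Lemma \ref{lemma:sigma_expansion}. The threshold condition is $100r^2 u \leq \epsilon d\lambda$, i.e., $100r^3 T \leq \epsilon d\lambda$, which follows from the hypothesis $500r^4\log n < \epsilon^2\lambda^2 d$ and our choice of $T$. The lemma then yields $|X_{i+1}|\geq (1+\tfrac{\epsilon\lambda}{4r})|X_i|$. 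Iterating, if the process runs all the way to step $T$ without stopping, then
\[
|X_T| \geq \left(1+\tfrac{\epsilon\lambda}{4r}\right)^T \geq \exp\!\left(\tfrac{T\epsilon\lambda}{8r}\right) \geq n,
\]
a contradiction, where we used $\log(1+t)\geq t/2$ for $t\in(0,1)$ in the middle inequality. Hence the stopping condition $|X_i|\geq (1-\epsilon)n$ is triggered at some $i\leq T$, proving the claim.

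The only real obstacle is the bookkeeping to ensure the forbidden sets $F_i(y)$ never exceed the common bound $u$, so that Lemma \ref{lemma:sigma_expansion} remains applicable throughout. This is resolved by the trivial observation that the path length after $i$ iterations is $i+1$, and the geometric growth of $|X_i|$ by a factor of $1+\Theta(\epsilon\lambda/r)$ reaches $n$ after only $O(r\log n/(\epsilon\lambda))$ steps, matching (by our choice of $T$ and the hypothesis on $d$) the threshold permitted by Lemma \ref{lemma:sigma_expansion}.
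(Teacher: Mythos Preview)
Your approach is essentially identical to the paper's: iteratively apply Lemma~\ref{lemma:sigma_expansion} with forbidden sets recording the coordinates along the paths built so far, and argue geometric growth forces $(1-\epsilon)n$ to be reached within $O(r\log n/(\epsilon\lambda))$ steps. The bookkeeping is slightly different (the paper lets $X_i$ be \emph{all} vertices reachable by a $\sigma$-path of size $i$ and only uses $\partial^{\sigma}(X_i,\mathcal{F})\subset X_{i+1}$, while you take equality), but this is immaterial.

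There is, however, a genuine arithmetic slip in your final display. With $t=\tfrac{\epsilon\lambda}{4r}$ and $T=\lfloor\tfrac{5r\log n}{\epsilon\lambda}\rfloor$, the bound $\log(1+t)\geq t/2$ gives
\[
(1+t)^{T}\ \geq\ \exp\!\left(\tfrac{T\epsilon\lambda}{8r}\right)\ \approx\ \exp\!\left(\tfrac{5}{8}\log n\right)\ =\ n^{5/8},
\]
which is \emph{not} $\geq n$, so the claimed contradiction does not follow. The fix is easy: to invoke Lemma~\ref{lemma:sigma_expansion} you already need $\epsilon,\lambda<1$, hence $t=\tfrac{\epsilon\lambda}{4r}\leq\tfrac{1}{8}$, and for such $t$ one has the sharper estimate $\log(1+t)\geq t - t^{2}/2\geq \tfrac{15}{16}t>\tfrac{4}{5}t$. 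This yields $(1+t)^{T}\geq\exp(\tfrac{4}{5}tT)=\exp(\tfrac{4}{5}\cdot\tfrac{5}{4}\log n)=n$ (up to the floor, which you can absorb by running one extra step or starting the indexing at $1$ as the paper does). With this correction the argument goes through.
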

	
	\begin{proof}
		Suppose that $V(G)\subset A_1\times \dots\times A_r$. Let $X_{1}=\{x\}$ and let $X_{i}$ be the set of vertices that can be reached from $x$ by a $\sigma$-path of size $i$. For $y\in X_{i}$, let $x=x_1,\dots,x_i=y$ be a $\sigma$-path, and let $F(y)\subset A_1\cup \dots\cup A_r$ be the set of all coordinates appearing in $x_1,\dots,x_{i-1}$.  For $y\in V(G)\setminus X_{i}$, let $F(y)=\emptyset$, and set  $\mathcal{F}=(F(y))_{y\in V(G)}$. Then $\partial^{\sigma}(X_i,\mathcal{F})\subset X_{i+1}$. Note that $|F(y)|<ri$ for every $y\in V(G)$. If $|X_{i}|\leq (1-\epsilon)n$ and $ri\leq \frac{\epsilon\lambda d}{100r^2}$, then we can apply Lemma \ref{lemma:sigma_expansion} to get
		$$|X_{i+1}|\geq |\partial^{\sigma}(X_i,\mathcal{F})|\geq \left(1+\frac{\epsilon\lambda}{4r}\right)|X_i|.$$
		Hence, by induction on $i$, if $i<\frac{\epsilon\lambda d}{100r^3}$, then either   $|X_{i}|\geq \left(1+\frac{\epsilon\lambda}{4r}\right)^{i}$ or $|X_j|\geq (1-\epsilon)n$ for some $j\leq i$. Setting 
		$I:=\frac{5r\log n}{\epsilon\lambda}< \frac{\epsilon\lambda d}{100r^3}$, we have $\left(1+\frac{\epsilon\lambda}{4r}\right)^{I}>n$, which implies $|X_{j}|\geq (1-\epsilon)n$ for some $j\leq I$. 
	\end{proof}

\noindent
	\textbf{Remark.} Following the same proof, it is not hard to show the following strengthening of Lemma \ref{lemma:paths}. Let $L$ be a positive integer such that $\frac{5r\log n}{\epsilon\lambda}<L<\frac{\epsilon\lambda d}{100r^3}$. Then at least $(1-\epsilon)n$ vertices of $G$ can be reached by a $\sigma$-path of size  exactly $L$ from $x$.
	
	\section{Finding tight paths}

	This section contains the bulk of the proof of Theorem \ref{thm:mainthm2}. Here, we prove that if $G$ is an $r$-line-graph with good expansion properties and $\sigma\in S_r$, then either there exists a $\sigma$-path from any vertex $x$ to any other vertex $y$, or $G$ contains a small subgraph with unusually high density. Let us give a rough outline of the proof.
	
	 Suppose that $V(G)\subset A_1\times\dots\times A_r$, then we partition each of the sets $A_1,\dots,A_r$ into two parts, which then gives a partition of $V(G)$ into $2^r$ parts. We find a partition such that $x$ and $y$ are in different parts $G_1$ and $G_2$, no vertex in $V(G_1)$ shares a coordinate with a vertex in $V(G_2)$, and the vertices of $V(G)$ are uniformly distributed among the $2^r$ parts. Let $\tau$ be the reverse of $\sigma$, that is $\tau(i)=\sigma(r+1-i)$ for $i\in [r]$.  Then our goal is to find two vertices $z\in V(G_1)$ and $z'\in V(G_2)$ such that $z'$ is a $\sigma$-neighbor of $z$, there is a $\sigma$-path $P$ from $x$ to $z$ in $V(G_1)$, and there is a $\tau$-path $P'$ from $y$ to $z'$ in $V(G_2)$. Indeed, then $P\cup P'$ is a $\sigma$-path from $x$ to $y$.
	
	Unfortunately, we are not quite able to achieve this. One of the main difficulties is that while $G$ might have good expansion properties, this might not be true for $G_1$ or $G_2$. Instead, for $i=1,2$, we cover most vertices of $G_i$ with expander subgraphs $G_{i,1},\dots,G_{i,k_i}$ using Lemma \ref{lemma:expander_covering}, and argue that either the number of expanders used is small, or one of the expander subgraphs is small. In the latter case, we found our small subgraph of $G$ with unusually high density. 
	
	Hence, suppose that both $k_1$ and $k_2$ are small. In this case,  we will choose a vertex $x_j \in G_{1,j}$ and a $\sigma$-path $P_{1,j}$ connecting $x$ with $x_j$ in $G$. Since $G$ is expander this is possible to do for most indices $j$.
	Let $U_1$ be the set of coordinates appearing on the union of paths $P_{1,j}$. Similarly, for most indices $j$, we will choose a vertex $y_j \in G_{2,j}$ and a $\tau$-path $P_{2,j}$ connecting $y$ with $y_j$ such that the vertices of $P_{2,j}$ have no coordinates in $U_1$. Let $U_2$ be the set of coordinates appearing on the union of paths $P_{2,j}$. Using the paths $P_{i,j}$ and expansion properties of $G_{i,j}$	we show that most vertices of $G_1$ can be reached from $x$ by a $\sigma$-path, whose every vertex $z$ satisfies that $z$ has no coordinate in $U_2$, and $z$ is either in $V(G_1)$, or its coordinates are in $U_1$. Also, most vertices of $G_2$ can be reached from $y$ by a $\tau$-path, whose every vertex $z'$ satisfies that $z'$ has no coordinate in $U_1$, and $z'$ is either in $V(G_2)$, or its coordinates are in $U_2$. Then, we find two vertices $z\in V(G_1)$ and $z'\in V(G_2)$ such that $z'$ is a $\sigma$-neighbor of $z$, there is a $\sigma$-path $P$ from $x$ to $z$ satisfying the above properties, and there is a $\tau$-path $P'$ from $y$ to $z'$ satisfying the above properties. Then $P\cup P'$ is a $\sigma$-path from $x$ to $y$.
	
	In the next claim, we show that if we randomly partition the sets $A_1,\dots,A_r$, then the vertices are well distributed among the $2^r$ parts.
	
	\begin{claim}\label{claim:partition}
		Let $\epsilon>0$, then there exists $c=c(r,\epsilon)>0$ such that the following holds. Let $G$ be an $r$-line-graph on $n$ vertices such that $V(G)\subset A_1\times\dots \times A_r$ and the minimum degree of $G$ is at least $c(\log n)$. Also, let $x,y\in V(G)$ such that $x$ and $y$ share no coordinates. Then for $i\in [r]$, there exists a partition of $A_{i}$ into two sets, $A_{i,1}$ and $A_{i,2}$, with the following three properties:
		\begin{enumerate}
		    \item $x\in A_{1,1}\times\dots \times A_{r,1}$ and $y\in A_{1,2}\times\dots \times A_{r,2}$.
		    \item Given vector $\e=(e_1,\dots,e_r)$ let $A_{\e}=A_{1,e_1}\times\dots\times A_{r,e_r}$. Then for all $\e \in \{1,2\}^r$ and every block $B$ of $G$, either $B\cap A_{\e}=\emptyset$, or
			$$\frac{1-\epsilon/2r}{2}|B|\leq |B \cap A_{\e}|\leq \frac{1+\epsilon/2r}{2}|B|.$$
			\item For every $\e \in \{1,2\}^r$, $$\frac{1-\epsilon}{2^r}n\leq |V(G)\cap A_{\e}|\leq \frac{1+\epsilon}{2^r}n.$$
		\end{enumerate}
	\end{claim}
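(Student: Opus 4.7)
The plan is to take a random partition: place $x_i$ in $A_{i,1}$, place $y_i$ in $A_{i,2}$, and assign every remaining element of $A_i$ independently and uniformly to one of the two halves. Property 1 is then immediate. For property 2, fix an $i$-block $B$ and a vector $\e \in \{1,2\}^r$. All vertices of $B$ share the coordinates $b_j$ for $j \neq i$; if $b_j \in A_{j,3-e_j}$ for some $j \neq i$, then $B \cap A_{\e} = \emptyset$ and the claim is vacuous. Otherwise $|B \cap A_{\e}|$ counts how many of the $|B|$ elements of $\{v_i : v \in B\}$ land in $A_{i,e_i}$, and this (apart from at most one forced element $x_i$ or $y_i$) is a sum of independent Bernoulli$(1/2)$ variables. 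Since $|B| \geq \delta(G) \geq c \log n$, the Chernoff bound yields
$$\Pr\!\bigl[\,\bigl|\,|B \cap A_{\e}| - \tfrac{1}{2}|B|\,\bigr| > \tfrac{\epsilon}{4r}|B|\,\bigr] \leq 2\exp\!\bigl(-c'\epsilon^{2}|B|/r^{2}\bigr),$$
and choosing $c = c(r,\epsilon)$ large enough makes this less than $n^{-10r}$. A union bound over the at most $2^{r} \cdot rn$ pairs $(B,\e)$ then establishes property 2 with probability at least $3/4$. Moreover, the same concentration, applied at the cruder deviation $|B|/4$, guarantees that in the non-vacuous case one actually has $|B \cap A_{\e}| \geq 1$; call this the \emph{strengthened} property 2.

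The strengthened property 2 implies property 3 deterministically. Fix $(e_1,\dots,e_r) \in \{1,2\}^r$ and, for $0 \leq k \leq r$, set
$$V_k = \{v \in V(G) : v_j \in A_{j,e_j} \text{ for all } 1 \leq j \leq k\},$$
so property 3 is the bound on $|V_r|$. By induction on $k$, I claim that $(1-\tfrac{\epsilon}{2r})^k \tfrac{n}{2^k} \leq |V_k| \leq (1+\tfrac{\epsilon}{2r})^k \tfrac{n}{2^k}$; the case $k=0$ is trivial. For the inductive step, observe that every $(k+1)$-block $B$ of $G$ is either contained in $V_k$ or disjoint from it, since all its vertices agree on coordinates $1,\dots,k$. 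Let $\mathcal{B}$ be the family of $(k+1)$-blocks inside $V_k$, so $|V_k| = \sum_{B \in \mathcal{B}} |B|$. For each $B \in \mathcal{B}$ with shared coordinates $b_j$ $(j \neq k+1)$, the vector $\e'$ defined by $e'_j = e_j$ for $j \leq k+1$ and $e'_j = $ the cell containing $b_j$ for $j > k+1$ satisfies $B \cap V_{k+1} = B \cap A_{\e'}$. The strengthened property 2, applied to $B$ and the conforming vector $\e'$, bounds this by $[(1-\tfrac{\epsilon}{2r})\tfrac{|B|}{2},\,(1+\tfrac{\epsilon}{2r})\tfrac{|B|}{2}]$, and summing over $B \in \mathcal{B}$ closes the induction. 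Finally, $(1 \pm \tfrac{\epsilon}{2r})^r \subseteq [1-\epsilon,\,1+\epsilon]$ yields property 3.

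The main obstacle is precisely the "empty or balanced" dichotomy built into property 2: an empty intersection $|B \cap A_{\e}| = 0$ at a $\e$ whose fixed coordinates \emph{do} fit $B$ would break the inductive averaging in the argument above, since the missing summand would not be compensated elsewhere. The hypothesis $\delta(G) \geq c \log n$ is exactly what allows the Chernoff tail (at the cruder deviation $|B|/4$) to absorb the union bound over all blocks and all $\e$, so that accidental emptiness is ruled out and emptiness occurs only when compelled by the ambient partition of the coordinates $A_j$ with $j \neq i$.
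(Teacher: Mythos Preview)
Your proof is correct and follows essentially the same approach as the paper: a uniform random partition conditioned on the placements of $x$ and $y$, Chernoff plus a union bound over all blocks and all $\e$ to get property~2, and then an induction (the paper indexes it by the number of nonzero coordinates of a vector in $\{0,1,2\}^r$, you by $k=0,\dots,r$) to derive property~3 from property~2. One small remark: your ``strengthened'' property~2 is not really an additional requirement, since the balanced estimate $|B\cap A_{\e}|\ge \tfrac{1-\epsilon/2r}{2}|B|>0$ already forces non-emptiness whenever the fixed coordinates of $B$ conform to $\e$; the separate Chernoff application at deviation $|B|/4$ is therefore redundant.
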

\begin{proof}
	 For $i\in [r]$, partition $A_i$ randomly into two sets $A_{i,1}$ and $A_{i,2}$ such that $x\in A_{1,1}\times \dots\times A_{r,1}$ and $y\in A_{1,2}\times \dots\times A_{r,2}$. More precisely, each element of $A_i\setminus\{x(i),y(i)\}$ is in either $A_{i,1}$ or $A_{i,2}$ independently with probability $1/2$. This partition clearly satisfies property 1 and we prove that for large enough $c$, with positive probability, it satisfies also 2 and 3. 
	 
	 Let $B$ be an $i$-block and let $\e \in \{1,2\}^r$. Without loss of generality we can assume that $i=1$. Then for $i=2,\dots,r$, there exists $a_{i}\in A_{i}$ such that $B\subset A_{1}\times \{a_2\}\times\dots\times \{a_r\}$. Therefore, if $a_{i}\not\in A_{i,e_i}$ for some $i\in \{2,\dots,r\}$, then $B\cap A_{\e}=\emptyset$. Otherwise, each element of $B'=B\setminus \{x,y\}$ appears in $A_{\e}$ independently with probability $1/2$. Therefore, choosing $c$ large enough and writing $m=|B'|$, we have by Chernoff's inequality
	 $$\mathbb{P}\left(|B'\cap A_e|-\frac{m}{2}\geq \frac{\epsilon m}{6r}\right)\leq 2e^{-\frac{\epsilon^2 m}{72}} \leq 2n^{-\frac{\epsilon^2c}{72r^2}}<\frac{1}{2^{r+1}r n}.$$
	 Here, $|B|\geq m\geq |B|-2$ and $\frac{\epsilon m}{12r}\geq 2$, so with probability at least $1-\frac{1}{2^{r+1}r n}$, we also have $\frac{1-\epsilon/(2r)}{2}|B|\leq |B\cap A_e|\leq \frac{1+\epsilon/(2r)}{2}|B|$. 
	 Since the number of blocks $B$ of $G$ is at most $rn$, and there are $2^r$ choices for $A_{\e}$, by the union bound the probability that property 2 holds is at least $\frac{1}{2}$.

	 To complete the proof we show that property 2 implies 3. For $i\in [r]$, let $A_{i,0}=A_{i}$, and for $\e \in \{0,1,2\}^{r}$, as before $A_{\e}=A_{1,e_1}\times\dots\times A_{r,e_r}$. We show, by induction, that if $\e \in \{0,1,2\}^{r}$ is a vector with exactly $s$ nonzero coordinates, then
	 $$\frac{(1-\frac{\epsilon}{2r})^{s}}{2^s}n\leq |V(G)\cap A_{\e}|\leq \frac{(1+\frac{\epsilon}{2r})^{s}}{2^s}n.$$
	 When $s=r$, this implies property 3. For $s=0$, the statement is trivially true, so let us suppose that $s\geq 1$, and that the statement holds for $s-1$ instead of $s$. Let $\e \in \{0,1,2\}^{r}$ such that $e$ contains $s$ nonzero coordinates, and suppose  that $e_{i}\neq 0$. Let $\f \in \{0,1,2\}^r$ be the vector we get after changing $e_{i}$ to 0 in $\e$. Note that if $B$ is an $i$-block of $G$, then $B$ is either disjoint from or completely contained in $A_{\f}$. Also, for each $B$ contained in $A_{\f}$, we can change uniquely the zero coordinates of $\f$ to be either $1$ or $2$ to obtain a vector $\g \in \{1,2\}^r$ such that $B\cap A_{\g}=B\cap A_{\e}\neq \emptyset$. Then by property 2, we have $\frac{1-\epsilon/(2r)}{2}|B|\leq |B\cap A_{\e}|\leq \frac{1+\epsilon/(2r)}{2}|B|.$ As this holds for every $i$-block $B$ contained in $A_{\f}$, we have $\frac{1-\epsilon/(2r)}{2}|V(G)\cap A_{\f}|\leq |V(G)\cap A_{\e}|\leq \frac{1+\epsilon/(2r)}{2}|V(G)\cap A_{\f}|$.
	 Since, by induction, $\frac{(1-\epsilon/(2r))^{s-1}}{2^{s-1}}n\leq |V(G)\cap A_{\f}|\leq \frac{(1+\epsilon/(2r))^{s-1}}{2^{s-1}}n$ this implies $\frac{(1-\epsilon/(2r))^{s}}{2^s}n\leq |V(G)\cap A_{\e}|\leq \frac{(1+\epsilon/(2r))^{s}}{2^s}n$.
\end{proof}

Now we are ready to prove the main lemma of this section.
	
	\begin{lemma}\label{lemma:density}
		There exist $c_1,c_2,c_3,c_4>0$ depending only on $r$ such that the following holds. Let $\lambda>0$, $K>1$, and let $n,d$ be positive integers such that $K<\frac{c_1\lambda^2 d}{\log n}$, $\lambda\leq \frac{1}{2\log_2 n}$ and $d\geq \frac{c_2\log n}{\lambda^{2}}$. Let $G$ be an $r$-line-graph with $n$ vertices that is $(\lambda,d)$-expander, and let $x,y\in V(G)$ such that $x$ and $y$ share no coordinates. Then either $G$ contains a $\sigma$-path of size at most $\frac{c_{3}\log n}{\lambda}$ from $x$ to $y$, or $G$ has a subgraph with at most  $\frac{n}{K}$ vertices and minimum degree at least $c_4d$.
	\end{lemma}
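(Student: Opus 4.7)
The plan follows the strategy outlined at the start of this section. First, apply Claim \ref{claim:partition} with a small constant $\epsilon$ to partition each $A_i$ into $A_{i,1}, A_{i,2}$ with $x \in V_1 := V(G)\cap(A_{1,1}\times\dots\times A_{r,1})$ and $y \in V_2 := V(G)\cap(A_{1,2}\times\dots\times A_{r,2})$; the minimum-degree prerequisite holds since $d \geq c_2\log n/\lambda^2 \gg \log n$. Write $\mathcal{A}_j := A_{1,j}\cup\dots\cup A_{r,j}$, so vertices of $V_1$ and $V_2$ have coordinates in the disjoint sets $\mathcal{A}_1,\mathcal{A}_2$. Item 2 of the claim implies that $G[V_j]$ has density $\Omega(d)$, so Corollary \ref{lemma:expander_covering} applied inside each $G[V_j]$ yields vertex-disjoint $(\lambda, c_0 d)$-expanders $G_{j,1},\dots,G_{j,k_j}$ (for some $c_0=c_0(r)>0$) covering all but a small fraction of $V_j$. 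If any $G_{j,i}$ has at most $n/K$ vertices we are done with $c_4:=c_0$; otherwise $k_1,k_2\leq K$.

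Second, build skeleton paths. Lemma \ref{lemma:paths} shows that from $x$, a $(1-\epsilon'')$-fraction of $V(G)$ is reachable by a $\sigma$-path of size $O(\log n/\lambda)$; since $|V(G_{1,i})|>n/K$ far exceeds $\epsilon'' n$, for each $i$ I pick $x_i\in V(G_{1,i})$ reachable by such a $\sigma$-path $P_{1,i}$. Let $U_1$ be the coordinates used by $\bigcup_i P_{1,i}$; then $|U_1|=O(Kr\log n/\lambda)$, and the hypothesis $K<c_1\lambda^2 d/\log n$ with $c_1$ small enough forces $|U_1|\leq \lambda d/(4r)$. By Lemma \ref{lemma:expander_robust}, $G-U_1$ remains a $(\lambda/2, d/2)$-expander; applying Lemma \ref{lemma:paths} inside it from $y$ in the $\tau$-direction (where $\tau$ reverses $\sigma$) yields $\tau$-paths $P_{2,i}$ from $y$ to some $y_i\in V(G_{2,i})$ avoiding $U_1$, and their coordinate union $U_2$ is disjoint from $U_1$ with $|U_2|\leq \lambda d/(4r)$.

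Third, extend within the expanders. For each $i$, Lemma \ref{lemma:expander_robust} says $G_{1,i}-U_2$ is still a $(\lambda/2, c_0 d/2)$-expander, so the Remark after Lemma \ref{lemma:paths} provides $\sigma$-paths of a single fixed length $L=O(\log n/\lambda)$ from $x_i$ to almost every vertex of $V(G_{1,i})$ using no coordinate in $U_2$. Concatenating with $P_{1,i}$ and taking the union over $i$, I obtain $Z_1\subseteq V_1$ of size at least $(1-o(1))|V_1|$ such that every $z\in Z_1$ is the endpoint of a $\sigma$-path from $x$ of size $O(\log n/\lambda)$ whose coordinates lie in $U_1\cup\mathcal{A}_1$ and avoid $U_2$. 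A symmetric construction yields $Z_2\subseteq V_2$ with the analogous $\tau$-path property from $y$, using coordinates in $U_2\cup\mathcal{A}_2$ and avoiding $U_1$.

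Finally, close the loop. If we find $z\in Z_1$ and $z'\in Z_2$ so that $z'$ is a $\sigma$-neighbor of $z$, then concatenating the $\sigma$-path $x\to z$, the $r-1$ intermediate certifying vertices, and the reverse $\tau$-path $z'\to y$ gives the required $\sigma$-path from $x$ to $y$: distinctness of all coordinates follows automatically from $U_1\cap U_2=\emptyset$ and $\mathcal{A}_1\cap\mathcal{A}_2=\emptyset$ together with the constructed avoidances. To locate the pair, for each $z\in Z_1$ set $F(z)$ to be $U_2$ together with the coordinates of $z$'s certificate $\sigma$-path; then $|F(z)|=O(\log n/\lambda)$, which lies below the threshold of Lemma \ref{lemma:sigma_expansion}, yielding $|\partial^\sigma(Z_1,\mathcal{F})|\geq (1+\Omega(\lambda))|Z_1|$. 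The main obstacle is that $\partial^\sigma(Z_1,\mathcal{F})$ can a priori lie anywhere in $V(G)$, while we need a nonempty intersection with $V_2\cap Z_2$; this I intend to overcome by exploiting item 2 of Claim \ref{claim:partition}, which guarantees that within each $i$-block at least half the vertices lie on the ``2''-side, and by tracking the $r$ successive coordinate changes defining $\partial^\sigma$ to show that a positive fraction of the $\sigma$-boundary lands in $V_2$, forcing the required intersection with $Z_2$. The hypothesis $K<c_1\lambda^2 d/\log n$ is precisely the tuning that keeps $|U_1|,|U_2|,|F(z)|$ below the thresholds required by Lemmas \ref{lemma:paths}, \ref{lemma:expander_robust}, and \ref{lemma:sigma_expansion} at every step.
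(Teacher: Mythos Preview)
Your overall strategy matches the paper's proof, but two steps as written do not go through.

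First, the claim that $|V(G_{1,i})|>n/K$ ``far exceeds $\epsilon'' n$'' is false in general: $K$ is not bounded by an absolute constant (it may be as large as $c_1\lambda^2 d/\log n$), so an individual expander piece $G_{1,i}$ can have fewer than $\epsilon'' n$ vertices and lie entirely inside the unreachable $\epsilon''$-fraction. The paper does not try to reach every $G_{1,i}$. Instead it lets $X$ be the set of vertices reachable from $x$ by a short $\sigma$-path, picks a representative $x_i$ only in those $G_{1,i}$ that meet $X$, and records that the pieces with no representative have \emph{total} size at most $|V(G)\setminus X|\le 2\epsilon n$, which is negligible in the final count. You need this bookkeeping (and its mirror on the $y$-side).

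Second, removing only $U_2$ from $G_{1,i}$ before building the internal $\sigma$-paths is not enough. Your skeleton path $P_{1,i}$ may use coordinates in $\mathcal{A}_1$ (certainly the coordinates of $x_i$ lie there), and your internal path inside $G_{1,i}-U_2$ is free to reuse those same coordinates, so the concatenation need not have all coordinates distinct and hence need not be a $\sigma$-path. You must delete $U_1\cup U_2$ (keeping only the coordinates of $x_i$) from $G_{1,i}$; the bound $|U_1\cup U_2|=O(rLK)\le \lambda\cdot(c_0 d)/(4r)$ still lets Lemma~\ref{lemma:expander_robust} apply. For the same reason, before building the skeleton paths $P_{1,i}$ you should first delete the $r$ coordinates of $y$ from $G$, or some $P_{1,i}$ could pass through a coordinate of $y$ and block $y$ from ever starting a $\tau$-path.

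Finally, invoking Lemma~\ref{lemma:sigma_expansion} in the closing step buys nothing: a $(1+\Omega(\lambda))$-factor on $|Z_1|$ does not by itself force an intersection with $Z_2$. The correct move is exactly what you sketch afterwards---use property~2 of Claim~\ref{claim:partition} directly, tracking the $r$ successive coordinate flips, to show that for any $W\subset V_1$ one has $|\partial^\sigma_G(W)\cap V_2|\ge (1-\epsilon)|W|$; since $|Z_1|>\tfrac12|V_1|$ and $|Z_2|>\tfrac12|V_2|$ with $|V_1|\approx|V_2|$, this forces $\partial^\sigma_G(Z_1)\cap Z_2\neq\emptyset$.
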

	
	\begin{proof}
		Let $\epsilon=2^{-r-6}$, and without loss of generality, let $\sigma$ be the permutation $12\dots r$. Also, suppose that $n\geq 2^{-\epsilon}$, otherwise, we can choose $c_1$ small enough to guarantee that $1<K\leq\frac{c_1\lambda^2 d}{\log n}$ cannot be satisfied. This also implies that $\lambda<\epsilon$. Let $c=c(r,\epsilon)$ be the constant given by Claim \ref{claim:partition}. We show that the constants $c_1=\frac{\epsilon^2}{240r^{4}}$, $c_2=\max\{10^5r^5\epsilon^{-3},c\}$, $c_3=40r\epsilon^{-1}$ and $c_4=\frac{\epsilon}{6r}$ suffice.
		
		Suppose that $G$ contains no subgraph with at most $\frac{n}{K}$ vertices and minimum degree at least $c_4d$. 
		Let $V(G)\subset A_1\times \dots\times A_r$, and for $i\in [r]$, partition $A_i$ into two sets $A_{i,1}$ and $A_{i,2}$ satisfying Claim \ref{claim:partition}. This can be done since $d\geq \frac{c_2\log n}{\lambda^2}\geq c\log n$.
		For $\e \in \{1,2\}^{r}$, recall that $A_{\e}=A_{1,e_1}\times\dots\times A_{r,e_r}$ and let $G_{\e}$ be the subgraph of $G$ induced on $A_{\e} \cap V(G)$. For simplicity, write $G_1$ and $G_2$ instead of $G_{(1,\dots,1)}$ and $G_{(2,\dots,2)}$, respectively. By Claim \ref{claim:partition}, we have the following properties:
		\begin{enumerate}
			\item $x\in V(G_1)$ and $y\in V(G_2)$,
			\item for  every $\e \in\{1,2\}^r$, $\frac{1+\epsilon}{2^r}n\geq |V(G_{\e})|\geq \frac{1-\epsilon}{2^r}n$, and
			\item for every block $B$ of $G$, and every $\e \in\{1,2\}^r$, either $B\cap V(G_{\e})=\emptyset$, or $$\frac{1+\frac{\epsilon}{2r}}{2}|B|\geq |B\cap V(G_{\e})|\geq \frac{1-\frac{\epsilon}{2r}}{2}|B|\geq \frac{d}{3}.$$
		\end{enumerate}
		
		Let $j\in\{1,2\}$. As the density of $G_{j}$ is at least $d/3$, we can apply Lemma \ref{lemma:expander_covering} to find vertex disjoint subgraphs $G_{j,1},\dots,G_{j,k_j}$ of $G_{j}$ such that $G_{j,i}$ is a $(\lambda,\frac{\epsilon d}{6r})$-expander, and $G_{j,1},\dots,G_{j,k_j}$ cover at least $(1-\epsilon)|V(G_{j})|$ vertices of $G_{j}$. By our assumption on the nonexistence of subgraphs of size at most $\frac{n}{K}$ and minimum degree $c_{4}d=\frac{\epsilon d}{6r}$, the size of each $G_{j,i}$ is at least $\frac{n}{K}$, which implies that $k_j\leq K$. 
		
		Let $G'$ be the graph we get after removing the coordinates of $y$ from $G$. As $r\leq \frac{\lambda d}{4r}$, we can apply Lemma \ref{lemma:expander_robust} to conclude that $G'$ is a $(\frac{\lambda}{2},\frac{d}{2})$-expander on at least $(1-\frac{r}{d})|V(G)|\geq (1-\epsilon)|V(G)|$ vertices. Let $X\subset V(G')$ be the set of vertices $z$ such that $z$ can be reached from $x$ by a $\sigma$-path of size at most $L:=\frac{5r\log n}{\epsilon(\lambda/2)}$ in $G'$. Noting that $500r^4\log n\leq \epsilon^{2}(\frac{\lambda}{2})^{2}\frac{d}{2}$ holds by the choice of $c_2$, we can apply Lemma \ref{lemma:paths} to get $|X|\geq (1-\epsilon)|V(G')|\geq (1-2\epsilon)|V(G)|$.
		
		For $i=1,\dots,k_1$, if $X\cap V(G_{1,i})$ is nonempty, pick an arbitrary vertex $x_i\in X\cap V(G_{1,i})$, which we call the representative of $G_{1,i}$. Without loss of generality, let $1,\dots,\ell_1$ be the indices $i$ for which $G_{1,i}$ has a representative, then $\sum_{i=\ell_1+1}^{k_1}|V(G_{1,i})|\leq |V(G)|-|X|\leq 2\epsilon|V(G)|$. Therefore, 
		\begin{equation}\label{equ:total_size}
		\sum_{i=1}^{\ell_1}|V(G_{1,i})|\geq (1-\epsilon)|V(G_1)|-2\epsilon |V(G)|\geq (1-2^{r+2}\epsilon)|V(G_1)|,
		\end{equation}
		where the last inequality holds by the bound on $|V(G_1)|$ from property 2.
		
		For $i=1,\dots,\ell_1$, let $P_{1,i}\subset V(G)$ be a $\sigma$-path of size at most $L$ from $x$ to $x_i$, and let $P_1=\bigcup_{i=1}^{\ell_1}P_{1,i}$. Also, let $U_1$ be the set of coordinates appearing in the vertices of $P_1$. Then $|P_1|\leq LK$, and $|U_1|\leq rLK\leq \frac{\lambda d}{4r}$, where the last inequality holds by the choice of $c_1$. Let $G''$ be the subgraph of $G$ after the removal of the elements of $U_1$.  We can apply Lemma \ref{lemma:expander_robust} to conclude that $G''$ is a $(\frac{\lambda}{2},\frac{d}{2})$-expander on at least $(1-\frac{|U_1|}{d})|V(G)|$ vertices. Here, $(1-\frac{|U_1|}{d})|V(G)|\geq (1-\frac{\lambda}{4r})|V(G)|>(1-\epsilon)|V(G)|$ holds.
		
		Let $\tau$ be the reverse of $\sigma$, that is, the permutation $r(r-1)\dots1$. Let $Y$ be the set of vertices in $G''$ which can be reached from $y$ by $\tau$-path of size at most $L$ in $G''$. Then $|Y|\geq (1-\epsilon)|V(G'')|>(1-2\epsilon)|V(G)|$ by Lemma \ref{lemma:paths}. For $i=1,\dots,k_2$, if $Y\cap V(G_{2,i})$ is nonempty, pick an arbitrary vertex $y_i\in Y\cap V(G_{2,i})$, which we call the representative of $G_{2,i}$. Without loss of generality, let $1,\dots,\ell_2$ be the indices $i$ for which $G_{2,i}$ has a representative, then $\sum_{i=\ell_2+1}^{k_2}|V(G_{2,i})|\leq |V(G)|-|Y|\leq 2\epsilon|V(G)|$. Therefore, $$\sum_{i=1}^{\ell_2}|V(G_{2,i})|\geq (1-\epsilon)|V(G_2)|-2\epsilon |V(G)|\geq (1-2^{r+2}\epsilon)|V(G_2)|.$$
		
		For $i=1,\dots,\ell_2$, let $P_{2,i}\subset V(G)$ be a $\tau$-path of size at most $L$ from $y$ to $y_i$, and let $P_2=\bigcup_{i=1}^{\ell_2}P_{2,i}$. Also, let $U_2$ be the set of coordinates appearing in the vertices of $P_2$. Then $|P_2|\leq LK$ and $|U_2|\leq rLK$.
		
		For $j=1,2$ and $i=1,\dots,\ell_j$, let $H_{j,i}$ be the graph we get after removing every element of $U=U_{1}\cup U_{2}$ from $G_{j,i}$, with the exception of the coordinates of $x_i$ in case $j=1$, and with the exception of the coordinates of $y_i$ in case $j=2$. Here, $|U|\leq 2rLK<\frac{\lambda(\epsilon d/(6r))}{4r}$ holds by the choice of $c_1$, so we can apply Lemma \ref{lemma:expander_robust} to conclude that $H_{j,i}$ is a $(\frac{\lambda}{2},\frac{\epsilon d}{12r})$-expander on at least $$\left(1-\frac{|U|}{\epsilon d/(6r)}\right)|V(G_{j,i})|>\left(1-\frac{\lambda}{4r}\right)|V(G_{j,i})|> (1-\epsilon)|V(G_{j,i})|$$ vertices. Let $X_{i}$ be the set of vertices in $H_{1,i}$ that can be reached from $x_i$ by $\sigma$-path of size at most $r$ in $H_{1,i}$. Also, let $Y_{i}$ be the set of vertices in $H_{2,i}$ that can be reached from $y_i$ by $\tau$-path of size at most $L$ in $H_{2,i}$. See Figure \ref{figure} for an illustration. Noting that $500r^4\log n<\epsilon^{2}(\frac{\lambda}{2})^2(\frac{\epsilon d}{12r})$ holds by the choice of $c_2$, we can apply Lemma \ref{lemma:paths} to deduce that $|X_{i}|\geq (1-\epsilon)|V(H_{1,i})|\geq (1-2\epsilon)|V(G_{1,i})|$, and similarly $|Y_{i}|\geq (1-2\epsilon)|V(G_{2,i})|$.

	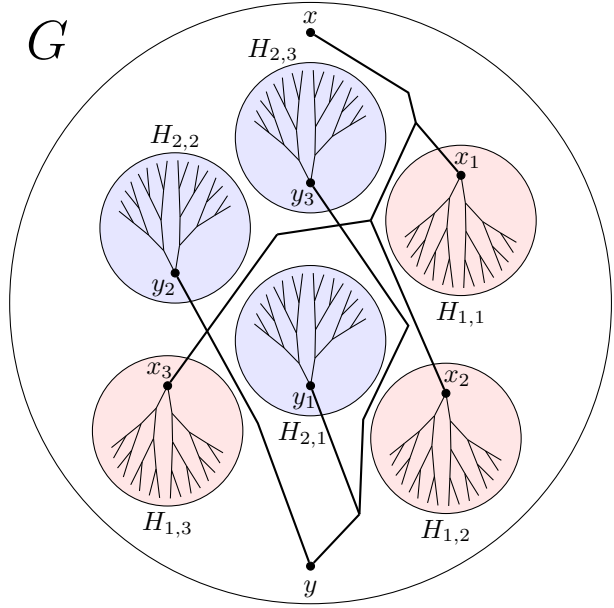
\begin{figure}
	\centering
	\begin{tikzpicture}
	\draw (0,0) circle (4) ;
	\node at (-3.5,3.5) {\huge $G$};
	
	\draw[fill=blue!10!white] (0,-0.5) circle (1);
	\draw[fill=red!10!white] (2,1.1) circle (1);
	\draw[fill=blue!10!white] (-1.8,1) circle (1);
	\draw[fill=red!10!white] (1.8,-1.8) circle (1);
	\draw[fill=red!10!white] (-1.9,-1.7) circle (1);
	\draw[fill=blue!10!white] (0,2.2) circle (1);
	\node[vertex] (x) at (0,3.6) {}; \node at (0,3.8) {\small $x$};
	\node[vertex] (y) at (0,-3.5) {};\node at (0,-3.8) {\small $y$};
	\node[vertex] (x1) at (2,1.7) {};\node at (2.09,1.9) {\small $x_1$};  \node at (2,-0.15) {\small $H_{1,1}$};
	\node[vertex] (x2) at (1.8,-1.2) {};\node at (1.95,-1) {\small $x_2$}; \node at (1.8,-3.05) {\small $H_{1,2}$};
	\node[vertex] (x3) at (-1.9,-1.1) {};\node at (-2,-0.9) {\small $x_3$};  \node at (-1.9,-2.95) {\small $H_{1,3}$};
	
	\node[vertex] (y1) at (0,-1.1) {};\node at (-0.08,-1.3) {\small $y_1$}; \node at (-0.1,-1.75) {\small $H_{2,1}$};
	\node[vertex] (y2) at (-1.8,0.4) {};\node at (-1.95,0.2) {\small $y_2$}; \node at (-1.8,2.2) {\small $H_{2,2}$};
	\node[vertex] (y3) at (0,1.6) {};\node at (-0.1,1.4) {\small $y_3$};  \node at (-0.5,3.35) {\small $H_{2,3}$};
	
	\draw[thick] (x) -- (1.3,2.8) -- (1.4,2.4) -- (x1);
	\draw[thick] (1.4,2.4) -- (0.8,1.1);
	\draw[thick] (x2) -- (0.8,1.1);
	\draw[thick] (x3) -- (-0.443739,0.913236);
	\draw[thick] (0.8,1.1) -- (-0.443739,0.913236);
	
	\draw[thick] (y2) -- (-0.7,-1.6) -- (y);
	\draw[thick] (y1) -- (0.65,-2.81);
	\draw[thick] (y3) -- (1.3,-0.3) -- (0.7,-1.55)  -- (0.65,-2.81); 
	\draw[thick] (0.65,-2.81) -- (y);
	
	\draw (0,1.6) -- (0.0580543,1.96579);
	\draw (0,1.6) -- (-0.15155,1.8979);
	\draw (0.0580543,1.96579) -- (0.287352,2.21594);
	\draw (0.0580543,1.96579) -- (0.0653794,2.34426);
	\draw (-0.15155,1.8979) -- (-0.187155,2.29996);
	\draw (-0.15155,1.8979) -- (-0.352203,2.10376);
	\draw (0.287352,2.21594) -- (0.524554,2.3656);
	\draw (0.287352,2.21594) -- (0.415834,2.5413);
	\draw (0.0653794,2.34426) -- (0.251011,2.66589);
	\draw (0.0653794,2.34426) -- (0.0523207,2.72256);
	\draw (-0.187155,2.29996) -- (-0.153429,2.70367);
	\draw (-0.187155,2.29996) -- (-0.338477,2.61177);
	\draw (-0.352203,2.10376) -- (-0.477858,2.45925);
	\draw (-0.352203,2.10376) -- (-0.552767,2.26669);
	\draw (0.524554,2.3656) -- (0.73646,2.49187);
	\draw (0.524554,2.3656) -- (0.69633,2.62861);
	\draw (0.415834,2.5413) -- (0.631058,2.7553);
	\draw (0.415834,2.5413) -- (0.543,2.86735);
	\draw (0.251011,2.66589) -- (0.435337,2.96072);
	\draw (0.251011,2.66589) -- (0.311956,3.03204);
	\draw (0.0523207,2.72256) -- (0.177312,3.07874);
	\draw (0.0523207,2.72256) -- (0.0362648,3.09912);
	\draw (-0.153429,2.70367) -- (-0.106091,3.09246);
	\draw (-0.153429,2.70367) -- (-0.244617,3.05899);
	\draw (-0.338477,2.61177) -- (-0.37431,2.99992);
	\draw (-0.338477,2.61177) -- (-0.490489,2.91738);
	\draw (-0.477858,2.45925) -- (-0.588958,2.81436);
	\draw (-0.477858,2.45925) -- (-0.666162,2.69457);
	\draw (-0.552767,2.26669) -- (-0.719313,2.56234);
	\draw (-0.552767,2.26669) -- (-0.746493,2.42244);
	
	\draw (-1.8,0.4) -- (-1.74195,0.765786);
	\draw (-1.8,0.4) -- (-1.95155,0.697903);
	\draw (-1.74195,0.765786) -- (-1.51265,1.01594);
	\draw (-1.74195,0.765786) -- (-1.73462,1.14426);
	\draw (-1.95155,0.697903) -- (-1.98716,1.09996);
	\draw (-1.95155,0.697903) -- (-2.1522,0.903756);
	\draw (-1.51265,1.01594) -- (-1.27545,1.1656);
	\draw (-1.51265,1.01594) -- (-1.38417,1.3413);
	\draw (-1.73462,1.14426) -- (-1.54899,1.46589);
	\draw (-1.73462,1.14426) -- (-1.74768,1.52256);
	\draw (-1.98716,1.09996) -- (-1.95343,1.50367);
	\draw (-1.98716,1.09996) -- (-2.13848,1.41177);
	\draw (-2.1522,0.903756) -- (-2.27786,1.25925);
	\draw (-2.1522,0.903756) -- (-2.35277,1.06669);
	\draw (-1.27545,1.1656) -- (-1.06354,1.29187);
	\draw (-1.27545,1.1656) -- (-1.10367,1.42861);
	\draw (-1.38417,1.3413) -- (-1.16894,1.5553);
	\draw (-1.38417,1.3413) -- (-1.257,1.66735);
	\draw (-1.54899,1.46589) -- (-1.36466,1.76072);
	\draw (-1.54899,1.46589) -- (-1.48804,1.83204);
	\draw (-1.74768,1.52256) -- (-1.62269,1.87874);
	\draw (-1.74768,1.52256) -- (-1.76374,1.89912);
	\draw (-1.95343,1.50367) -- (-1.90609,1.89246);
	\draw (-1.95343,1.50367) -- (-2.04462,1.85899);
	\draw (-2.13848,1.41177) -- (-2.17431,1.79992);
	\draw (-2.13848,1.41177) -- (-2.29049,1.71738);
	\draw (-2.27786,1.25925) -- (-2.38896,1.61436);
	\draw (-2.27786,1.25925) -- (-2.46616,1.49457);
	\draw (-2.35277,1.06669) -- (-2.51931,1.36234);
	\draw (-2.35277,1.06669) -- (-2.54649,1.22244);
	
	\draw (0,-1.1) -- (0.0580543,-0.734214);
	\draw (0,-1.1) -- (-0.15155,-0.802097);
	\draw (0.0580543,-0.734214) -- (0.287352,-0.484056);
	\draw (0.0580543,-0.734214) -- (0.0653794,-0.355743);
	\draw (-0.15155,-0.802097) -- (-0.187155,-0.400042);
	\draw (-0.15155,-0.802097) -- (-0.352203,-0.596244);
	\draw (0.287352,-0.484056) -- (0.524554,-0.3344);
	\draw (0.287352,-0.484056) -- (0.415834,-0.158702);
	\draw (0.0653794,-0.355743) -- (0.251011,-0.0341117);
	\draw (0.0653794,-0.355743) -- (0.0523207,0.0225614);
	\draw (-0.187155,-0.400042) -- (-0.153429,0.00367093);
	\draw (-0.187155,-0.400042) -- (-0.338477,-0.0882345);
	\draw (-0.352203,-0.596244) -- (-0.477858,-0.240755);
	\draw (-0.352203,-0.596244) -- (-0.552767,-0.433312);
	\draw (0.524554,-0.3344) -- (0.73646,-0.208133);
	\draw (0.524554,-0.3344) -- (0.69633,-0.0713876);
	\draw (0.415834,-0.158702) -- (0.631058,0.0552979);
	\draw (0.415834,-0.158702) -- (0.543,0.16735);
	\draw (0.251011,-0.0341117) -- (0.435337,0.260722);
	\draw (0.251011,-0.0341117) -- (0.311956,0.332043);
	\draw (0.0523207,0.0225614) -- (0.177312,0.378739);
	\draw (0.0523207,0.0225614) -- (0.0362648,0.399123);
	\draw (-0.153429,0.00367093) -- (-0.106091,0.392459);
	\draw (-0.153429,0.00367093) -- (-0.244617,0.358987);
	\draw (-0.338477,-0.0882345) -- (-0.37431,0.299917);
	\draw (-0.338477,-0.0882345) -- (-0.490489,0.217381);
	\draw (-0.477858,-0.240755) -- (-0.588958,0.114359);
	\draw (-0.477858,-0.240755) -- (-0.666162,-0.00542963);
	\draw (-0.552767,-0.433312) -- (-0.719313,-0.137659);
	\draw (-0.552767,-0.433312) -- (-0.746493,-0.277555);
	
	\draw (2,1.7) -- (2.05805,1.33421);
	\draw (2,1.7) -- (1.84845,1.4021);
	\draw (2.05805,1.33421) -- (2.28735,1.08406);
	\draw (2.05805,1.33421) -- (2.06538,0.955743);
	\draw (1.84845,1.4021) -- (1.81284,1.00004);
	\draw (1.84845,1.4021) -- (1.6478,1.19624);
	\draw (2.28735,1.08406) -- (2.52455,0.9344);
	\draw (2.28735,1.08406) -- (2.41583,0.758702);
	\draw (2.06538,0.955743) -- (2.25101,0.634112);
	\draw (2.06538,0.955743) -- (2.05232,0.577439);
	\draw (1.81284,1.00004) -- (1.84657,0.596329);
	\draw (1.81284,1.00004) -- (1.66152,0.688235);
	\draw (1.6478,1.19624) -- (1.52214,0.840755);
	\draw (1.6478,1.19624) -- (1.44723,1.03331);
	\draw (2.52455,0.9344) -- (2.73646,0.808133);
	\draw (2.52455,0.9344) -- (2.69633,0.671388);
	\draw (2.41583,0.758702) -- (2.63106,0.544702);
	\draw (2.41583,0.758702) -- (2.543,0.43265);
	\draw (2.25101,0.634112) -- (2.43534,0.339278);
	\draw (2.25101,0.634112) -- (2.31196,0.267957);
	\draw (2.05232,0.577439) -- (2.17731,0.221261);
	\draw (2.05232,0.577439) -- (2.03626,0.200877);
	\draw (1.84657,0.596329) -- (1.89391,0.207541);
	\draw (1.84657,0.596329) -- (1.75538,0.241013);
	\draw (1.66152,0.688235) -- (1.62569,0.300083);
	\draw (1.66152,0.688235) -- (1.50951,0.382619);
	\draw (1.52214,0.840755) -- (1.41104,0.485641);
	\draw (1.52214,0.840755) -- (1.33384,0.60543);
	\draw (1.44723,1.03331) -- (1.28069,0.737659);
	\draw (1.44723,1.03331) -- (1.25351,0.877555);
	
	\draw (1.8,-1.2) -- (1.85805,-1.56579);
	\draw (1.8,-1.2) -- (1.64845,-1.4979);
	\draw (1.85805,-1.56579) -- (2.08735,-1.81594);
	\draw (1.85805,-1.56579) -- (1.86538,-1.94426);
	\draw (1.64845,-1.4979) -- (1.61284,-1.89996);
	\draw (1.64845,-1.4979) -- (1.4478,-1.70376);
	\draw (2.08735,-1.81594) -- (2.32455,-1.9656);
	\draw (2.08735,-1.81594) -- (2.21583,-2.1413);
	\draw (1.86538,-1.94426) -- (2.05101,-2.26589);
	\draw (1.86538,-1.94426) -- (1.85232,-2.32256);
	\draw (1.61284,-1.89996) -- (1.64657,-2.30367);
	\draw (1.61284,-1.89996) -- (1.46152,-2.21177);
	\draw (1.4478,-1.70376) -- (1.32214,-2.05925);
	\draw (1.4478,-1.70376) -- (1.24723,-1.86669);
	\draw (2.32455,-1.9656) -- (2.53646,-2.09187);
	\draw (2.32455,-1.9656) -- (2.49633,-2.22861);
	\draw (2.21583,-2.1413) -- (2.43106,-2.3553);
	\draw (2.21583,-2.1413) -- (2.343,-2.46735);
	\draw (2.05101,-2.26589) -- (2.23534,-2.56072);
	\draw (2.05101,-2.26589) -- (2.11196,-2.63204);
	\draw (1.85232,-2.32256) -- (1.97731,-2.67874);
	\draw (1.85232,-2.32256) -- (1.83626,-2.69912);
	\draw (1.64657,-2.30367) -- (1.69391,-2.69246);
	\draw (1.64657,-2.30367) -- (1.55538,-2.65899);
	\draw (1.46152,-2.21177) -- (1.42569,-2.59992);
	\draw (1.46152,-2.21177) -- (1.30951,-2.51738);
	\draw (1.32214,-2.05925) -- (1.21104,-2.41436);
	\draw (1.32214,-2.05925) -- (1.13384,-2.29457);
	\draw (1.24723,-1.86669) -- (1.08069,-2.16234);
	\draw (1.24723,-1.86669) -- (1.05351,-2.02244);
	
	\draw (-1.9,-1.1) -- (-1.84195,-1.46579);
	\draw (-1.9,-1.1) -- (-2.05155,-1.3979);
	\draw (-1.84195,-1.46579) -- (-1.61265,-1.71594);
	\draw (-1.84195,-1.46579) -- (-1.83462,-1.84426);
	\draw (-2.05155,-1.3979) -- (-2.08716,-1.79996);
	\draw (-2.05155,-1.3979) -- (-2.2522,-1.60376);
	\draw (-1.61265,-1.71594) -- (-1.37545,-1.8656);
	\draw (-1.61265,-1.71594) -- (-1.48417,-2.0413);
	\draw (-1.83462,-1.84426) -- (-1.64899,-2.16589);
	\draw (-1.83462,-1.84426) -- (-1.84768,-2.22256);
	\draw (-2.08716,-1.79996) -- (-2.05343,-2.20367);
	\draw (-2.08716,-1.79996) -- (-2.23848,-2.11177);
	\draw (-2.2522,-1.60376) -- (-2.37786,-1.95925);
	\draw (-2.2522,-1.60376) -- (-2.45277,-1.76669);
	\draw (-1.37545,-1.8656) -- (-1.16354,-1.99187);
	\draw (-1.37545,-1.8656) -- (-1.20367,-2.12861);
	\draw (-1.48417,-2.0413) -- (-1.26894,-2.2553);
	\draw (-1.48417,-2.0413) -- (-1.357,-2.36735);
	\draw (-1.64899,-2.16589) -- (-1.46466,-2.46072);
	\draw (-1.64899,-2.16589) -- (-1.58804,-2.53204);
	\draw (-1.84768,-2.22256) -- (-1.72269,-2.57874);
	\draw (-1.84768,-2.22256) -- (-1.86374,-2.59912);
	\draw (-2.05343,-2.20367) -- (-2.00609,-2.59246);
	\draw (-2.05343,-2.20367) -- (-2.14462,-2.55899);
	\draw (-2.23848,-2.11177) -- (-2.27431,-2.49992);
	\draw (-2.23848,-2.11177) -- (-2.39049,-2.41738);
	\draw (-2.37786,-1.95925) -- (-2.48896,-2.31436);
	\draw (-2.37786,-1.95925) -- (-2.56616,-2.19457);
	\draw (-2.45277,-1.76669) -- (-2.61931,-2.06234);
	\draw (-2.45277,-1.76669) -- (-2.64649,-1.92244);
	\end{tikzpicture}
	\caption{An illustration of how we build $\sigma$-paths from $x$ and $\tau$-paths from $y$.}
	\label{figure}
\end{figure}

		Let $X'=\bigcup_{i=1}^{\ell_1}X_{i}$ and $Y'=\bigcup_{i=1}^{\ell_2}Y_{i}$, then $|X'|\geq \sum_{i=1}^{\ell_1}(1-2\epsilon)|V(G_{1,i})|\geq (1-2^{r+3}\epsilon)|V(G_{1})|$ by inequality (\ref{equ:total_size}), and similarly $|Y'|\geq (1-2^{r+3}\epsilon)|V(G_{2})|$.
		
		Here, $X'$ and $Y'$ have the following property. Every vertex $z\in X'$ can be reached from $x$ by a $\sigma$-path $P_{z}$ of size at most $2L$ such that every coordinate of every vertex of $P_{z}$ is in the set $(A_{1,1}\cup \dots\cup A_{1,r}\cup U_1)\setminus U_{2}$. Also,  every vertex $z'\in Y'$ can be reached from $y$ by a $\tau$-path $P_{z'}$ of size at most $2L$ such that every coordinate of every vertex of $P_{z'}$ is in the set $(A_{2,1}\cup \dots\cup A_{2,r}\cup U_2)\setminus U_{1}$. Therefore, if $z\in X'$ and $z'\in Y'$, then no vertex in $P_z$ shares a coordinate with any vertex in $P_{z'}$.
		
		In order to finish the proof, it is enough to find $z=(z_1,\dots,z_r)\in X'$ and $z'=(z_1',\dots,z_r')\in Y'$ such that $w_{i}=(z_1',\dots,z_i',z_{i+1},\dots,z_r)$ are vertices of $G$ for $i=1,\dots,r-1$. Indeed, then $P_z\cup P_{z'}$ is a $\sigma$-path of size at most $4L=\frac{c_{3}\log n}{\lambda}$ from $x$ to $y$. But this is equivalent to the statement that $\partial^{\sigma}_{G}(X')\cap Y'\neq \emptyset$. 
		
		\begin{claim}
			Let $W\subset |V(G_1)|$. Then $|\partial_{G}^{\sigma}(W)\cap V(G_2)|\geq (1-\epsilon)|W|$.
		\end{claim}
		\begin{proof}
			For $i\in \{0,\dots,r\}$, let $\e_{i}\in \{1,2\}^r$ be the vector whose first $i$ coordinates are 2, and the last $r-i$ coordinates are 1. Let $W_{0}=W$ and for $i=1,\dots,r$, let $W_i=\partial^{(i)}(W_{i-1})\cap V(G_{\e_i})$. Then $\partial_{G}^{\sigma}(W)\cap V(G_2)=W_r$. We show that $|W_i|\geq (1-\epsilon/r) |W_{i-1}|$, then we get that $|W_r|\geq (1-\epsilon/r)^r|W|\geq (1-\epsilon)|W|$, finishing the proof.
			
			 Let $\mathcal{B}$ be the set of $i$-blocks of $G$ having a nonempty intersection with $W_{i-1}$. Let $B\in \mathcal{B}$, then $B\cap V(G_{\e_i})=B\cap W_{i}$. But $|B\cap V(G_{\e_i})|\geq \frac{1-\epsilon/(2r)}{2}|B|$ and $|B\cap W_{i-1}|\leq |B\cap V(G_{\e_{i-1}})|\leq\frac{1+\epsilon/(2r)}{2}|B|$ by 2., so $|B\cap W_i|\geq \frac{1-\epsilon/(2r)}{1+\epsilon/(2r)}|B\cap W_{i-1}|\geq (1-\epsilon/r)|B\cap W_{i-1}|$. As this is true for every block in $\mathcal{B}$, we get $|W_{i}|\geq (1-\epsilon/r)|W_{i-1}|$.
		\end{proof}
		
		By the previous claim, we have $$|\partial^{\sigma}_{G}(X')\cap V(G_2)|\geq (1-\epsilon)|X'|\geq (1-2^{r+4}\epsilon)|V(G_{1})|>\frac{1}{2}|V(G_2)|$$
		where the third inequality holds by the bound on $|V(G_2)|$ from property 2. Since also $$|Y'|\geq (1-2^{r+3}\epsilon)|V(G_2)|>\frac{1}{2}|V(G_2)|,$$ we get that $\partial^{\sigma}_{G}(X')\cap Y'\neq \emptyset$, completing the proof.
	\end{proof}

\section{Finding a tight cycle}
	
The following statement follows easily from Lemma \ref{lemma:paths}.

	\begin{corollary}\label{cor:tight_cycle}
		There exist $c_1',c_2',c_3'>0$ depending only on $r$ such that the following holds. Let $K>1$ and let $n,d$ be positive integers such that $d\geq c_1'(\log n)^{3}$ and $K\leq c_2'\frac{d}{(\log n)^{3}}$. If $G$ is an $r$-line-graph on $n$ vertices of density at least $d$, then either $G$ contains a tight cycle, or $G$ contains a subgraph with minimum degree at least $c_3'd$ on at most $\frac{n}{K}$ vertices.
	\end{corollary}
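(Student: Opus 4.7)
The plan is to apply Lemma \ref{lemma:density} twice. First, I will use it to construct a $\sigma$-path $P_1$ from a chosen vertex $x$ to a chosen vertex $y$ in an expander subgraph of $G$. Then, after deleting the internal coordinates of $P_1$ and invoking robustness, I will use it again to construct a $\sigma$-path $P_2$ from $y$ back to $x$. The concatenation of $P_1$ and $P_2$ then forms a tight cycle in $G$.

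To start, apply Lemma \ref{lemma:expander} with $\lambda = 1/(2 \log_2 n)$ to $G$ to obtain a $(\lambda, d/(2r))$-expander subgraph $H \subseteq G$ of density at least $d/2$. Next pick $x, y \in V(H)$ sharing no coordinate: fix any $x \in V(H)$ and apply Lemma \ref{lemma:expander_robust} to delete the $r$ coordinates of $x$ from $H$. Since $\delta(H) \geq d/(2r) \gg r$ under the hypothesis $d \geq c_1' (\log n)^3$, the resulting graph is nonempty, and its vertex set is exactly the set of valid candidates for $y$. Now apply Lemma \ref{lemma:density} to $H$ with the pair $(x, y)$, parameters $(\lambda, d/(2r))$, and the given $K$. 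For suitable $c_1', c_2'$, the three hypotheses $\lambda \leq 1/(2 \log_2 |V(H)|)$, $d/(2r) \geq c_2 \log n/\lambda^2$, and $K \leq c_1 \lambda^2 (d/(2r))/\log n$ all follow from $d \geq c_1' (\log n)^3$ and $K \leq c_2' d/(\log n)^3$. The lemma either returns a subgraph of $H$ on at most $|V(H)|/K \leq n/K$ vertices with minimum degree $\geq c_4 d/(2r) = \Omega(d)$, which already proves the corollary with $c_3'$ absorbing the constant, or it returns a $\sigma$-path $P_1$ from $x$ to $y$ in $H$ of size at most $c_3 \log n/\lambda = O((\log n)^2)$.

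Suppose $P_1$ is produced, and let $U \subseteq A_1 \cup \dots \cup A_r$ be the set of coordinates used by the internal vertices of $P_1$; then $|U| = O((\log n)^2)$, and the hypothesis $d \geq c_1' (\log n)^3$ ensures $|U| \leq \lambda \cdot (d/(2r))/(4r)$. Consequently, Lemma \ref{lemma:expander_robust} applied after deleting $U$ from $H$ yields a $(\lambda/2, d/(4r))$-expander $H'$ containing both $x$ and $y$ (their coordinates were spared). A second application of Lemma \ref{lemma:density}, now to $H'$ with the reversed pair $(y, x)$, either produces another small dense subgraph (finishing the proof) or a $\sigma$-path $P_2$ from $y$ to $x$ inside $H'$. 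Because $P_2$ lies in $H'$ its internal coordinates avoid $U$, and because $P_2$ is a $\sigma$-path its internal coordinates are automatically disjoint from the endpoints' coordinates; hence they are disjoint from every coordinate used in $P_1$. Since $P_1$ and $P_2$ use the same permutation $\sigma$, they share the coordinates of $x$ (respectively $y$) in matching $\sigma$-order, so the cyclic concatenation of the two paths yields a sequence of distinct hypergraph vertices whose every $r$-window, after identification along the matching, is a window of $P_1$ or of $P_2$ and hence a vertex of $G$. This is a tight cycle in $G$.

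The main obstacle is the calibration of the robustness step: $P_1$ contributes $\Theta((\log n)^2)$ coordinate deletions while Lemma \ref{lemma:expander_robust} tolerates at most $\lambda \cdot \delta(H)/(4r) = \Theta(d/\log n)$, which forces $d = \Omega((\log n)^3)$; the analogous analysis of the $K$-hypothesis of Lemma \ref{lemma:density} simultaneously forces $K = O(d/(\log n)^3)$. Together these are precisely the hypotheses of the corollary.
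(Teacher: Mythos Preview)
Your proposal is correct and follows essentially the same route as the paper's proof: pass to a $(\lambda,d/(2r))$-expander via Lemma~\ref{lemma:expander}, apply Lemma~\ref{lemma:density} to get a $\sigma$-path $P_1$ from $x$ to $y$ (or the small dense subgraph), delete the internal coordinates of $P_1$ and use Lemma~\ref{lemma:expander_robust} to retain a $(\lambda/2,d/(4r))$-expander, then apply Lemma~\ref{lemma:density} once more from $y$ to $x$ and concatenate. Your added justification for the existence of a coordinate-disjoint pair $x,y$ and your explicit check that the two $\sigma$-paths glue into a tight cycle are details the paper leaves implicit, but the argument is otherwise the same.
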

	
	\begin{proof}
		Let $c_1,c_2,c_3,c_4$ be the constants given by Lemma \ref{lemma:density}. We show that $c_1'=\max\{64rc_{2},256r^3c_{3}\}$, $c_2'=\frac{c_1}{32r^2}$, $c_3'=\frac{c_{4}}{4r}$ suffices. Let $\lambda=\frac{1}{2\log_2 n}$. As $\mbox{dens}(G)\geq d$, $G$ contains a subgraph $H$ that is $(\lambda,\frac{d}{2r})$-expander, by Lemma \ref{lemma:expander}. Suppose that $H$ contains no subgraph with minimum degree at least $c_3'd$ on at most $\frac{|V(H)|}{K}\leq \frac{n}{K}$ vertices. Let $\sigma\in S_{r}$ be an arbitrary permutation.
		
		Let $x,y\in V(H)$ such that $x$ and $y$ share no coordinates. As the parameters $\lambda,\frac{d}{2r},K$ satisfy the desired conditions of Lemma \ref{lemma:density}, there exists a $\sigma$-path $P$ from $x$ to $y$ in $H$ of size at most $\frac{c_{3}\log n}{\lambda}< 4c_{3}(\log n)^2$. Let $U$ be the set of coordinates appearing in $P\setminus \{x,y\}$, and let $H'$ be the subgraph of $H$ we get after removing the elements of $U$. Note that $|U|\leq 16rc_{3}(\log n)^{2}\leq \frac{\lambda(d/(2r))}{4r}$, so we can apply Lemma \ref{lemma:expander_robust} to get that $H'$ is a $(\frac{\lambda}{2},\frac{d}{4r})$-expander. But then applying Lemma \ref{lemma:density} again, noting that $\frac{\lambda}{2},\frac{d}{4r},K$ also satisfy he desired conditions, we get that $H'$ contains a $\sigma$-path $P'$ from $y$ to $x$. Observe that $P\cup P'$ is a tight cycle, finishing the proof.
	\end{proof}

	\begin{proof}[Proof of Theorem \ref{thm:mainthm2}]
		Let $c_1',c_2',c_3'$ be the constants given by Corollary \ref{cor:tight_cycle}, and let $K=e^{(\log n)^{1/2}}$. Choose $c$ sufficiently large such that the following inequalities hold for every positive integer $n$: $c\geq 2\log(1/c_3')$,  $\exp(\frac{c}{2}(\log n)^{1/2})\geq c_{1}'(\log n)^{3}$ and $K(\log n)^3\leq c_2'\exp(\frac{c}{2}(\log n)^{1/2})$.
		Suppose that $G=G_{0}$ does not contain a tight cycle. Define the graphs $G_{1},G_{2},\dots,G_{k}$ for $k<\sqrt{\log n}$ with the following properties: 
		\begin{enumerate}
			\item for $i\geq 1$, $G_{i}$ is a subgraph of $G_{i-1}$, 
			\item $\mbox{dens}(G_{i})\geq (c_{3}')^{i}d$,
			\item $|V(G_{i})|\leq \frac{n}{K^{i}}$.
		\end{enumerate}
		Clearly, $G_{0}$ satisfies these properties. If $G_{i}$ is already defined satisfying these properties for some $0\leq i\leq \sqrt{\log n}$, define $G_{i+1}$ as follows. We have 
		$$d_{i}:=\mbox{dens}(G_{i})\geq (c_{3}')^{i+1}d\geq \exp\left(c(\log n)^{1/2}-\log\left(\frac{1}{c_{3}'}\right)(\log n)^{1/2}\right)\geq \exp\left(\frac{c}{2}(\log n)^{1/2}\right),$$
		which implies $d_{i}\geq c_{1}'(\log n)^{3}\geq c_{1}'(\log |V(G_{i})|)^{3}$ and $K\leq c_2'\frac{d_{i}}{(\log n)^{3}}\leq c_2'\frac{d_{i}}{(\log |V(G_i)|)^{3}}$. Therefore, we can apply Lemma \ref{cor:tight_cycle} to conclude that there exists a subgraph $G_{i+1}$ of $G_{i}$ with at most $\frac{|V(G_{i})|}{K}\leq \frac{n}{K^{i+1}}$ vertices and density at least  $(c_{3}')^{i+1}d$.
		
		However, this is a contradiction: for $I=\lfloor  \sqrt{\log n}\rfloor$, the graph $G_{I}$ has less vertices than its density. Therefore, $G$ must contain a tight cycle.
	\end{proof}

\section{Concluding remarks}
In this paper we proved that an $r$-uniform hypergraph $\mathcal{H}$ on $n$ vertices with $dn^2$ edges, where $d\geq e^{c\sqrt{\log n}}$, contains a tight cycle of length at most $O((\log n)^{2})$. 
Our proof can be also used to show that one can find a cycle of any specific length $L$, divisible by $r$, such that $\Omega((\log n)^{2})<L<de^{-O(\sqrt{\log n})}$. To achieve this, one needs just to follow the remark after Lemma \ref{lemma:paths} and its consequences. 

It is very plausible that our approach works also when $d$ is polylogarithmic in $n$. The only place in our paper that requires larger degree 	
comes from Lemma \ref{lemma:density}. It seems that after taking a random partition of the sets $A_1,\dots,A_r$, the graphs $G_1$ and $G_2$ should also have good expansion properties. If this is the case, then one can show that $\mbox{ex}(n,\mathcal{C}^{(r)})=n^{r-1}(\log n)^{O(1)}$. On the other hand, in order to prove $\ex(n,\mathcal{C}^{(r)})=O(n^{r-1})$, one seems to need new ideas.
	
Finally, let us mention a related conjecture of Conlon, see \cite{MPS}, about the extremal number of tight cycles of given constant length. Let $C^{(r)}_{\ell}$ denote the $r$-uniform tight cycle of length $\ell$.
\begin{conjecture}\label{conj:length}
There exists $c=c(r)>0$ such that for every $\ell\geq r+1$ which is divisible by $r$, we have $\mbox{ex}(n, C^{(r)}_{\ell})=O(n^{r-1+\frac{c}{\ell}})$.
\end{conjecture}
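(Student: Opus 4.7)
The plan is to adapt the density-increment framework of Theorem~\ref{thm:mainthm2} to the polynomial-density regime $d \geq n^{c/\ell}$ while insisting that the tight cycle produced has length exactly $\ell$ rather than $\Theta((\log n)^{2})$. An immediate encouragement is the remark after Lemma~\ref{lemma:paths}, which already shows how to prescribe the length of a $\sigma$-path rather than just its order of magnitude; the plan is to route this control through Lemma~\ref{lemma:density} and Corollary~\ref{cor:tight_cycle} to produce tight cycles of any prescribed length within the admissible range.

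Concretely, given an $r$-line-graph $G$ of density $d \geq Cn^{c/\ell}$ with $c=c(r)$ to be chosen, I would proceed as follows. First, extract a subgraph $H$ which is a $(\lambda,\Omega(d))$-expander with $\lambda$ as large as the density permits. Second, fix vertices $x,y \in V(H)$ sharing no coordinate and, using the exact-length form of Lemma~\ref{lemma:paths}, show that a $(1-\epsilon)$-fraction of $V(H)$ is reachable from $x$ by a $\sigma$-path of length exactly $\ell/(2r)$, and likewise from $y$ by a $\tau$-path of the same exact length. Third, apply the random partition and splicing argument of Claim~\ref{claim:partition} and Lemma~\ref{lemma:density}, carrying the exact-length constraint through, to combine the two halves into a tight cycle of length exactly $\ell$. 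Fourth, whenever any step fails, extract a small subgraph of larger density and recurse exactly as in the proof of Theorem~\ref{thm:mainthm2}; the hypothesis $d \geq n^{c/\ell}$ has to be chosen so that the recursion terminates before the vertex count drops below the current density.

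The hard part will be the first step: producing an expander of the required strength. Lemma~\ref{lemma:expander} delivers only $\lambda = O(1/\log n)$, whereas for the exact-length variant of Lemma~\ref{lemma:paths} to yield $\sigma$-paths of length $\ell/(2r)$ one needs $\lambda = \Omega((\log n)/\ell)$. Under the hypothesis $d \geq n^{c/\ell}$ the right target is $\lambda \asymp (\log d)/(\log n) \asymp c/\ell$, so the expander-extraction result one would like to prove is: every $r$-line-graph of density $d$ contains a subgraph of density $d/\polylog(n)$ which is a $(\Theta(\log d/\log n),\,\Omega(d/\polylog(n)))$-expander. The graph case of such a statement is implicit in Bondy--Simonovits-style BFS counting, but the $r$-line-graph analogue is what has not been established, and is essentially the technical heart of the conjecture. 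A secondary obstacle is that the random partition in Claim~\ref{claim:partition} currently degrades $\lambda$ by a constant factor, which is harmless in the paper's polylog regime but compounds dangerously in a polynomial regime; a structurally chosen partition, or an argument that avoids partitioning altogether, may be necessary to push the scheme through.
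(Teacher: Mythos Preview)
The statement you are addressing is Conjecture~\ref{conj:length}, which the paper presents as an \emph{open problem} in the concluding remarks; the paper contains no proof of it. There is therefore nothing in the paper to compare your proposal against.

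Your write-up is not a proof but a research outline, and you are candid about this: you correctly isolate the main obstruction, namely that Lemma~\ref{lemma:expander} only yields $\lambda = O(1/\log n)$ whereas the exact-length path argument would need $\lambda = \Omega(1/\ell)$ to reach paths of size $\Theta(\ell/r)$. That gap is genuine and is precisely why the conjecture is open. Your secondary worry about the random partition degrading $\lambda$ is less serious---a constant-factor loss in $\lambda$ is absorbed into the constant $c$---but the primary obstacle (extracting an expander with $\lambda \asymp (\log d)/(\log n)$ from an $r$-line-graph of density $d$) is a real missing ingredient that neither the paper nor your outline supplies. Until that step is established, the scheme does not go through, and you should not present this as a proof.
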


\noindent
Note that, it is essential that $r$ divides $\ell$. Otherwise a complete $r$-partite $r$-uniform hypergraph shows that the extremal number is $\Omega(n^{r})$.
For $r=3$, a solution of the above conjecture gives an improved upper bound on the maximum number of edges in a subgraph of the hypercube $\{0,1\}^n$ that contain no cycle of length $4k+2$ for large $k$.
See \cite{C10} for connection between these two problems. 

\vspace{0.3cm}
\noindent	
{\bf Acknowledgement.}\, We would like to thank Jacques Verstra\"ete for bringing \cite{C10} to our attention and Stefan Glock for useful discussions.

\end{document}